\documentclass[12pt,leqno,a4paper]{amsart}
\usepackage{amsmath, amsthm, verbatim, amsfonts, amssymb, color}
\usepackage{mathrsfs}
\usepackage{dsfont}
\usepackage{verbatim}
\usepackage[a4paper,margin=1in]{geometry}

\theoremstyle{plain}
\newtheorem{theorem}{Theorem}[section]
\newtheorem{corollary}[theorem]{Corollary}
\newtheorem{lemma}[theorem]{Lemma}
\newtheorem{proposition}[theorem]{Proposition}

\theoremstyle{definition}
\newtheorem{remark}[theorem]{Remark}
\newtheorem{example}[theorem]{Example}

\numberwithin{equation}{section}

\newcommand\E{\mathds E}
\newcommand\R{\mathds R}
\newcommand\W{\mathds W}
\newcommand\N{\mathds N}
\newcommand\Pp{\mathds P}
\newcommand\Ss{\mathds S}

\newcommand\Bscr{\mathscr{B}}
\newcommand\Dscr{\mathscr{D}}
\newcommand\Cscr{\mathscr{C}}
\newcommand\Escr{\mathscr{E}}
\newcommand\Fscr{\mathscr{F}}
\newcommand\I{\mathds 1}

\renewcommand\d{\mathrm{d}}
\newcommand\e{\mathrm{e}}

\newcommand{\cmspace}{\mathds{H}_M}

\begin{document}\allowdisplaybreaks
\title[A Cameron--Martin Type Quasi-Invariance Theorem]{\bfseries On a Cameron--Martin Type Quasi-Invariance
Theorem and Applications to Subordinate Brownian Motion}

\author[C.-S.~Deng]{Chang-Song Deng}
\address[C.-S.~Deng]{School of Mathematics and Statistics\\ Wuhan University\\ Wuhan 430072, China}
\curraddr{TU Dresden\\ Fachrichtung Mathematik\\ Institut f\"{u}r Mathematische Stochastik\\ 01062 Dresden, Germany}
\email{dengcs@whu.edu.cn}
\thanks{The first-named author gratefully acknowledges support through the Alexander-von-Humboldt foundation, the National Natural Science Foundation of China (11401442) and the International Postdoctoral Exchange Fellowship Program (2013)}

\author[R.\,L.~Schilling]{Ren\'e L.\ Schilling}
\address[R.\,L.~Schilling]{TU Dresden\\ Fachrichtung Mathematik\\ Institut f\"{u}r Mathematische Stochastik\\ 01062 Dresden, Germany}
\email{rene.schilling@tu-dresden.de}

\subjclass[2010]{60J75, 60G17}
\keywords{Cameron--Martin theorem, quasi-invariance, integration by parts formula, subordinate Brownian motion}

\date{\today}

\maketitle

\begin{abstract}
    We present a Cameron--Martin type quasi-invariance theorem for subordinate Brownian motion. As applications, we establish an integration by parts formula and construct a gradient operator on the path space of subordinate Brownian motion, and we obtain some canonical Dirichlet forms. These findings extend the corresponding classical results for Brownian motion.
\end{abstract}

\section{Introduction}\label{sec1}

    Recently the stability of properties of Markov processes and their semigroups under subordination in the sense of Bochner has attracted great interest. In \cite{GRW}, Wang's dimension free Harnack inequality was established for a class of subordinate semigroups. Nash and Poincar\'{e} inequalities are preserved under subordination, cf. \cite{SW,GM}. In our recent paper \cite{DS14}, we show that shift Harnack inequalities (in the sense of \cite{Wan14}) remain valid under subordination in the sense of Bochner. It is a natural question whether further probabilistic properties, e.g.\ quasi-invariance, are preserved by subordination.

    The Cameron--Martin theorem, which was discovered by R.H.~Cameron and W.T.~Martin \cite{CM} (see e.g.~\cite{Dri, Hs95, Hsu} and the references therein for further developments), plays a fundamental role in the analysis on the path space of diffusion processes. It states that the Wiener measure (i.e.\ the distribution of Brownian motion) is quasi-invariant under a Cameron--Martin shift. In this paper, we shall derive an analogous result for subordinate Brownian motion.

    Let us recall some basic notations. Throughout this paper, we set
    $$
        [0,A]
        :=
        \begin{cases}
            \{x\,:\, 0\leq x\leq A\}, &\text{if\ \ } A<\infty,\\
            \{x\,:\, 0\leq x<\infty\}, &\text{if\ \ } A=\infty,
        \end{cases}
    $$
    and make the convention
    $$
        \int_u^v=\int_{(u,v)} \quad\text{for all $0\leq u<v\leq\infty$.}
    $$
    By $S=(S_t)_{t\in[0,T]}$, where $0<T\leq \infty$, we
    denote a non-trivial subordinator, i.e.\ an
    increasing L\'{e}vy process with $S_0=0$ and Laplace transform
    $$
        \E \e^{-uS_t}=\e^{-t\phi(u)},\quad u>0,\;t\in[0,T].
    $$
    The characteristic (Laplace) exponent $\phi$ is a Bernstein
    function with $\phi(0+)=0$; all such
    exponents are completely characterized by the following L\'evy--Khintchine formula
    \begin{equation}\label{bernstein}
        \phi(u)
        = bu+\int_0^\infty\left(1-\e^{-ux}\right)\nu(\d x), \quad u>0,
    \end{equation}
    where $b\geq0$ is the drift parameter and $\nu$ is a L\'{e}vy measure, i.e.\ a measure on $(0,\infty)$ satisfying $\int_0^\infty(x\wedge1)\,\nu(\d x)<\infty$; we use \cite{SSV} as our standard reference for Bernstein functions and subordination. If $T=\infty$ then
    $$
        S_\infty:=\lim_{t\uparrow\infty}S_t=\infty\quad \text{a.s.}
    $$
    Let
    $$
        M
        = \operatorname{\mathrm{ess\,sup}} S_T
        = \sup\left\{r>0\,:\,\Pp(S_T<r)<1\right\}.
    $$

\begin{remark}\label{eegg}
    $M$ can attain the following values:
    \begin{enumerate}
    \item[(i)]
        If $T=\infty$, then $S_\infty=\infty$ a.s.\ and $M=\infty$.

    \item[(ii)]
        If $T<\infty$ and $S_t$ is deterministic, i.e.\ $S_t=ct$ for some constant $c>0$, then $M=cT<\infty$.

    \item[(iii)]
        If $T<\infty$ and $S_t$ is non-deterministic, then $M=\infty$.

        \noindent\emph{Indeed:} Since $\nu\neq0$, there exists some finite interval $[u,v]\subset (0,\infty)$ such that $\eta:=\nu([u,v])\in(0,\infty)$. The jump times of jumps with size in the interval $[u,v]$ define a Poisson process $(N_t)_{t\in[0,T]}$ with intensity $\eta$. Since $S_T\geq uN_T$, we conclude that $\operatorname{\mathrm{ess\,sup}} S_T=\infty$.
\end{enumerate}
\end{remark}

    Let $(W_t)_{t\in[0,M]}$ be a standard $d$-dimensional Brownian motion starting from zero. The Wiener measure $\mu$, i.e.\ the distribution of $(W_t)_{t\in[0,M]}$, is a probability measure on the path space
    $$
        \W_M
        =\left\{w:[0,M]\to\R^d\,:\,\text{$w$ is continuous and $w(0)=0$}\right\},
    $$
    which is endowed with the topology of locally uniform convergence. We write
    $$
        \cmspace:=\left\{h\in\W_M\,:\,\text{$h$ is absolutely continuous and $h'\in L^2([0,M];\R^d)$}\right\}
    $$
    for the Cameron--Martin space; $\cmspace$ is a Hilbert space with the inner product
    $$
        \langle g,h\rangle _{\cmspace}
        :=\int_0^M\langle g'(t),h'(t)\rangle _{\R^d}\,\d t,\quad g,h\in\cmspace.
    $$

    Let $h\in\W_M$ and denote by $\mu_h$ the distribution of $(W_t+h(t))_{t\in[0,M]}$. Then the Cameron--Martin theorem says that $\mu$  and $\mu_h$ are equivalent (i.e.\ mutually absolutely continuous) if, and only if, $h\in\cmspace$; in this case
    $$
        \frac{\d\mu_h}{\d\mu}
        =\exp\left[\int_0^Mh'(t)\,\d W_t-\frac12\int_0^M |h'(t)|^2\,\d t\right].
    $$

    Throughout this paper, we assume that $(S_t)_{t\in[0,T]}$ is independent of the standard Brownian motion $(W_t)_{t\in[0,M]}$ on $\R^d$. The process $W_S=(W_{S_t})_{t\in[0,T]}$ is called a \emph{subordinate Brownian motion}; it is a rotationally invariant L\'{e}vy process with characteristic (Fourier) exponent (symbol) $\phi(|\xi|^2/2)
    = - \log\E \e^{i\langle \xi, W_{S_1}\rangle_{\R^d}}$.

    Inspired by the quasi-invariance property of the Wiener measure under Cameron--Martin shifts, we are interested in the following problem: {\itshape Let $\xi = (\xi_t)_{t\in[0,T]}$ be a further \textup{(}random\textup{)} function and consider the perturbation $W_S + \xi = (W_{S_t}+\xi_t)_{t\in[0,T]}$ of the subordinate Brownian motion $W_S= (W_{S_t})_{t\in [0,T]}$. For which \textup{(}random\textup{)} perturbations are the distributions of $W_S$ and $W_S+\xi$ equivalent?} Related results for compound Poisson processes can be found in \cite{WY} and \cite{Wan11}.  In this note, we assume that $\xi_t=h(S_t)$ where $h\in\W_M$ and $t\in[0,T]$. Let $\mu^S$ and $\mu_h^S$ be the distributions of $(W_{S_t})_{t\in[0,T]}$ and $(W_{S_t}+h(S_t))_{t\in[0,T]}$, respectively. We aim to find sufficient and necessary conditions so that $\mu_h^S$ is equivalent to $\mu^S$.

    Our problem can also be seen as a stability property for Bochner's subordination: Under which circumstances is the quasi-invariance on Wiener space inherited by the subordinate (i.e.\ time-changed) process?. For a deterministic  subordinator $S$, this is just the classical Cameron--Martin theorem. For a general subordinator we need to assume
    some additional conditions in order to ensure that
    quasi-invariance is preserved.

    This paper is organized in the following way: First, we establish the Cameron--Martin type theorem for subordinate Brownian motion in Section~\ref{sec2}. If $h$ is absolutely continuous, $h(0)=0$ and $\int_0^{S_T}|h'(t)|^2\,\d t<\infty$ almost surely, then $\mu^S_h$ is equivalent to $\mu^S$. The crucial point in our proof is that a functional on the path space of a subordinate Brownian motion is also a functional on the classical Wiener space. Once we have established the quasi-invariance property, we can derive in Section~\ref{sec3} an integration
    by parts formula. A natural gradient operator $D^{(\kappa)}$ is defined for $\kappa\in\R$ on the family of cylinder functions $\Fscr C_b^\infty$ through the Riesz representation theorem. Furthermore, we characterize the gradient operator $D^{(\kappa)}$; as applications we construct some natural Dirichlet forms on the path space of subordinate Brownian motion. In the final section, we present the detailed proof of a result used in Section 3, which describes the subordinator index of $S$ at the origin in terms of the underlying L\'{e}vy measure (cf. \cite{Sch98} for the corresponding result for general Feller processes).

\section{Cameron--Martin type theorem}\label{sec2}

In this section, we extend the classical Cameron--Martin theorem to subordinate Brownian motion. Let $h\in\W_M$ and write $\mu^S$ and $\mu_h^S$ for the distributions of $W_S = (W_{S_t})_{t\in[0,T]}$ and $W + h(S) = (W_{S_t}+h(S_t))_{t\in[0,T]}$, respectively. Denote by $AC([0,M];\R^d)$ the family of all absolutely continuous functions from $[0,M]$ to $\R^d$. The following Cameron--Martin type space will be important ($\kappa\in\R$):
$$
    \cmspace^{(\kappa)}
    :=\left\{h\in\W_M\cap AC([0,M];\R^d)\,:\,\int_0^M|h'(t)|^2 \left[\Pp(S_T\geq t)\right]^\kappa\,\d t<\infty\right\},
$$
which becomes a Hilbert space with the inner product
$$
    \langle g,h\rangle _{\cmspace^{(\kappa)}}
    :=\int_0^M \langle g'(t),h'(t)\rangle _{\R^d} \left[\Pp(S_T\geq t)\right]^\kappa\,\d t,
    \quad g,h\in\cmspace^{(\kappa)}.
$$
As usual, we set $0^0 := 1$. It is clear that $\cmspace^{(0)}=\cmspace$ and $\cmspace^{(\kappa)}$ is non-decreasing in $\kappa$, i.e.\ $\kappa_1\leq\kappa_2$ implies $\cmspace^{(\kappa_1)}\subset\cmspace^{(\kappa_2)}$. A direct calculation shows that
$$
    \int_0^M \langle g'(t),h'(t)\rangle _{\R^d}\left[\Pp(S_T\geq t)\right]^\kappa\,\d t
    =
    \E\left[\int_0^{S_T} \langle g'(t), h'(t)\rangle _{\R^d}\left[\Pp(S_T\geq t)\right]^{\kappa-1} \,\d t\right]
$$
for $g,h\in\cmspace^{(\kappa)}$. Therefore,
$$
    h\in\cmspace^{(1)}\quad\Longrightarrow\quad
    \int_0^{S_T}|h'(t)|^2\,\d t<\infty\quad
    \text{almost surely}.
$$

\begin{remark}
\textup{\bfseries a)}
    If either $T=\infty$ or $T<\infty$ and $M<\infty$, then $\cmspace^{(\kappa)}$ does not depend on $\kappa$ and $\cmspace^{(\kappa)}=\cmspace$ for all $\kappa\in\R$.

\medskip\noindent\textup{\bfseries b)}
    If $T<\infty$ and $M=\infty$, then $\cmspace^{(\kappa)}$ is strictly increasing in $\kappa\in\R$, see Example \ref{ggvv} below.
\end{remark}

\begin{example}\label{ggvv}
    Let $T<\infty$, $M=\infty$ and $\kappa_1<\kappa_2$. We will show that $\cmspace^{(\kappa_2)}\setminus\cmspace^{(\kappa_1)}\neq\emptyset$. Since $\Pp(S_T\geq t)\downarrow0$ as $t\uparrow\infty$, we have
    $$
        [0,\infty)=\bigcup_{m=1}^\infty B_m,
    $$
    where
    $$
        B_m
        :=\left\{t\in[0,\infty)\,:\, \frac{1}{(m+1)^3}< \left[\Pp(S_T\geq t)\right]^{\kappa_2-\kappa_1}
        \leq\frac{1}{m^3}\right\},\quad m\in\N,
    $$
    are mutually disjoint bounded Borel measurable sets. Put
    $$
        \Phi(\d t):=\left[\Pp(S_T\geq t)\right]^{\kappa_1}\,\d t.
    $$
    It is easy to see that $\Phi(B_m)<\infty$ for each $m\in\N$ and
    $$
        \#\left\{m\in\N\,:\,\Phi(B_m)>0\right\}
        = \infty.
    $$
    Pick $h\in\W_\infty\cap AC([0,\infty);\R^d)$ such that
    $$
        |h'(t)|=\sum_{m\,:\,\Phi(B_m)>0} \I_{B_m}(t)\sqrt{\frac{m}{\Phi(B_m)}},\quad t\geq0.
    $$
    Then
    \begin{align*}
        \int_0^\infty|h'(t)|^2\left[\Pp(S_T\geq t)\right]^{\kappa_2}\,\d t
        &=\sum_{m=1}^\infty \int_{B_m}|h'(t)|^2\left[\Pp(S_T\geq t)\right]^{\kappa_2-\kappa_1}\,\Phi(\d t)\\
        &\leq \sum_{m\,:\,\Phi(B_m)>0} \int_{B_m}\frac{m}{\Phi(B_m)} \cdot\frac{1}{m^3}\,\Phi(\d t)\\
        &=\sum_{m\,:\,\Phi(B_m)>0}\frac{1}{m^2}\\
        &\leq\sum_{m=1}^\infty\frac{1}{m^2}
        <\infty,
    \end{align*}
    whereas
    \begin{align*}
        \int_0^\infty|h'(t)|^2\left[\Pp(S_T\geq t)\right]^{\kappa_1}\,\d t
        &=\sum_{m\,:\,\Phi(B_m)>0} \int_{B_m}|h'(t)|^2\,\Phi(\d t)\\
        &=\sum_{m\,:\,\Phi(B_m)>0} \int_{B_m}\frac{m}{\Phi(B_m)}\,\Phi(\d t)\\
        &=\sum_{m\,:\,\Phi(B_m)>0}m=\infty.
    \end{align*}
    This means that $h\in\cmspace^{(\kappa_2)}\setminus\cmspace^{(\kappa_1)}$.
\end{example}

\begin{theorem}\label{main}
    If $h\in\W_M\cap AC([0,M];\R^d)$ and $\int_0^{S_T}|h'(t)|^2\,\d t<\infty$ almost surely \textup{(}e.g.\ if $h\in\cmspace^{(1)}$\textup{)}, then $\mu_h^S$ and $\mu^S$ are equivalent; moreover,
    $$
        \frac{\d\mu_h^S}{\d\mu^S}
        =\exp\left[\int_0^{S_T}h'(t)\,\d W_{t}-\frac12\int_0^{S_T}|h'(t)|^2\,\d t\right].
    $$
\end{theorem}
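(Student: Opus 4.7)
The plan is to reduce the statement to the classical Cameron--Martin theorem by conditioning on the independent subordinator $S$. The central observation (flagged already in the introduction) is that, once a path of $S$ is fixed, the subordinate trajectory $(W_{S_t}+h(S_t))_{t\in[0,T]}$ is a measurable functional of the shifted Brownian path $(W_u+h(u))_{u\in[0,S_T]}$: reading off the latter at the (now deterministic) instants $S_t$ recovers every coordinate of the former.

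Concretely, I would fix a bounded Borel test functional $F$ on the path space of $W_S$. By the independence of $W$ and $S$, the regular conditional distribution of $(W_u)_{u\in[0,M]}$ given $S$ is Wiener measure, and the hypothesis $\int_0^{S_T}|h'(t)|^2\,\d t<\infty$ almost surely means that, for $\Pp$-almost every realisation of $S$, the restriction $h|_{[0,S_T]}$ sits in the Cameron--Martin space of Brownian motion on $[0,S_T]$. The classical Cameron--Martin theorem, applied conditionally on $S$, then yields
\[
    \E\bigl[F(W_S+h(S))\bigm|S\bigr]
    = \E\bigl[F(W_S)\,R_h\bigm|S\bigr],
    \qquad
    R_h := \exp\!\left[\int_0^{S_T}h'(t)\,\d W_{t}-\tfrac12\int_0^{S_T}|h'(t)|^2\,\d t\right].
\]
Taking expectations produces the integration identity $\int F\,\d\mu_h^S=\E[F(W_S)R_h]$, which is precisely the content of the theorem. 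Normalisation $\E R_h=1$ follows by the tower property from the conditional identity $\E[R_h\mid S]=1$, and equivalence of $\mu^S$ and $\mu_h^S$ is then automatic from the almost sure positivity of $R_h$ (a null set for one measure is pulled back to a null set for the other, so no separate argument with $-h$ is needed).

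The principal obstacle is the random upper limit $S_T$ in the stochastic integral, since the classical Cameron--Martin theorem is customarily phrased over a deterministic horizon. I would circumvent this by working throughout conditionally on $S$: then $S_T$ is a fixed (almost surely finite when $T<\infty$, possibly infinite when $T=\infty$) endpoint, and the classical theorem applies on $[0,S_T]$ directly. When $S_T=\infty$ one first truncates to $[0,N]$ for $N<\infty$ and then passes to $N\uparrow\infty$, using $\int_0^\infty|h'|^2\,\d t<\infty$ and the exponential-martingale property to justify the limit. A final small point is the interpretation of the formula for $\d\mu_h^S/\d\mu^S$: strictly, the right-hand side is a random variable on the underlying probability space, and is to be read via the integration identity above; the genuine path-space density is recovered by taking the $\sigma(W_S)$-conditional expectation, but this subtlety plays no role in the applications that follow.
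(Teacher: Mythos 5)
Your proposal is correct and follows essentially the same route as the paper: by independence, condition on (equivalently, fix a path of) the subordinator, apply the classical Cameron--Martin theorem on the now deterministic horizon $[0,S_T]$, and integrate out the subordinator law, with the paper merely organising the conditioning as an identity on the product space $\W_M\times\Ss_T$ checked on cylinder functions via a monotone class argument. Your remarks on the $S_T=\infty$ case and on reading the density through the integration identity are consistent with how the paper handles (or implicitly handles) these points.
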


\begin{remark}
    If $S_t\equiv t$ for all $t\in[0,T]$, then $M=T\in(0,\infty]$ and Theorem \ref{main} reduces to the classical Cameron--Martin theorem.
\end{remark}

\begin{theorem}\label{ffss}
    Let $h\in\W_M$. If $\mu_h^S$ and $\mu^S$ are equivalent, then $h\in\cmspace$.
\end{theorem}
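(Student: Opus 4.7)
The plan is to reduce to the classical Cameron--Martin theorem applied fibre-wise in $S$ and then to recombine the fibre-wise conclusions. Since $W$ and $S$ are independent, I would write the mixtures
\[
    \mu^S = \int \nu_s \, \Pp_S(\d s), \qquad \mu_h^S = \int \nu_s^h \, \Pp_S(\d s),
\]
where $\nu_s$ denotes the centered Gaussian law on path space with covariance $K_s(u,v) = s_{u\wedge v}\,I_d$ (the law of Brownian motion run on the deterministic clock $s$) and $\nu_s^h$ is its translate by the deterministic function $t\mapsto h(s_t)$.

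The first and most delicate step would be to upgrade the equivalence of the mixtures, $\mu^S \sim \mu_h^S$, to fibre-wise equivalence $\nu_s \sim \nu_s^h$ for $\Pp_S$-almost every $s$. By the Feldman--H\'{a}jek dichotomy, on each fibre the two conditional Gaussian measures are either equivalent or mutually singular. The key point I would exploit is that $s$ can be recovered from the path of the subordinate Brownian motion---for instance, the quadratic variation $[W_S]_t$ equals $S_t$ almost surely---so any measurable family of fibre-wise singularity witnesses $A_s$ can be assembled into a single event on $D([0,T];\R^d)$ that would distinguish $\mu^S$ from $\mu_h^S$, contradicting their assumed equivalence.

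Once fibre-wise equivalence is in hand, the classical Cameron--Martin theorem (necessity direction) applied to the Gaussian process $(W_{s_t})_{t\in[0,T]}$ identifies its Cameron--Martin space with shifts of the form $t\mapsto H(s_t)$ where $H\in H^1([0,s_T];\R^d)$. Comparing with the actual shift $t\mapsto h(s_t)$ forces $h\in AC([0,s_T];\R^d)$ and $\int_0^{s_T}|h'(u)|^2\,\d u<\infty$ for $\Pp_S$-a.e.\ $s$.

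Finally I would globalize in $s$. When $M<\infty$, Remark~\ref{eegg}(ii) gives $S_T=M$ almost surely, so the previous step directly yields $h\in\cmspace$. When $M=\infty$, letting $s_T$ approach $M$ gives absolute continuity on every bounded subinterval of $[0,M)$, and the global $L^2$ bound $\int_0^M|h'(t)|^2\,\d t<\infty$ would follow from a complementary relative-entropy / Girsanov computation: the Kullback--Leibler divergence of $\mu_h^S$ relative to $\mu^S$ can be expressed as $\tfrac12\int_0^M|h'(t)|^2\,\Pp(S_T\geq t)\,\d t$, and the fact that the Radon--Nikodym derivative is an honest density then yields the desired integrability on $[0,M]$. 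The main obstacle I expect is precisely the fibre-wise equivalence step, since the measurable assembly of singularity witnesses hinges on the path-wise recoverability of $S$ from $W_S$, and it is this recoverability that separates the present setting from a generic mixture of Gaussian measures where such a globalization would fail.
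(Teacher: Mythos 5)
Your route is genuinely different from the paper's (which argues by contraposition: if $h\notin\cmspace$, Hsu's classical dichotomy gives a set $A\subset\W_M$ with $\mu(A)=1$, $\mu_h(A)=0$, and this singularity set is lifted wholesale to $\tilde A=\{w\circ\ell: w\in A,\ \ell\in\Ss_T\}$ -- no disintegration, no fibre-wise analysis), but as it stands your argument has a genuine gap at exactly the step you flag as delicate. The claim that $S$ is recoverable from the path of $W_S$ via ``$[W_S]_t=S_t$ a.s.'' is false: writing $b$ for the drift of $S$, the quadratic variation of $W_S$ is $d\,b\,t+\sum_{u\le t}|\Delta W_{S_u}|^2$, and conditionally on a jump $\Delta S_u=x$ the quantity $|\Delta W_{S_u}|^2$ is distributed like $x$ times a $\chi^2_d$ variable, so it equals $x$ with probability zero. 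More fundamentally, $S$ is in general not a measurable functional of $W_S$ at all: for a compound Poisson subordinator whose jump law is not a point mass, the conditional law of $\Delta S_u$ given the single Gaussian increment $\Delta W_{S_u}$ is non-degenerate, so no measurable map recovers the jump sizes almost surely. Hence the assembly of the fibre-wise singularity witnesses $A_s$ into one event on the Skorokhod space collapses, and with it the passage from $\mu^S\sim\mu_h^S$ to $\nu_s\sim\nu_s^h$ for $\Pp_S$-a.e.\ $s$. Even granting fibre-wise equivalence, the next step overreaches: $\nu_s^h$ depends on $h$ only through its values on the closed range of $s$, and the Cameron--Martin space of the covariance $s_{u\wedge v}I_d$ consists of shifts $t\mapsto H(s_t)$ where $H$ merely admits an $H^1$-extension to $[0,s_T]$; it does not force $h$ itself to be absolutely continuous there. (Your instinct that the law of $W_S$ only ``sees'' $h$ along the range of $S$ is precisely the point where any proof of this implication must do real work; the paper's lifting step avoids your fibre-wise machinery but must confront the same issue when it identifies $\mu_h^S(\tilde A)$ with $\mu_h(A)$.)

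The final globalization step fails on two further counts. First, equivalence of measures does not imply finiteness of relative entropy, so the fact that the Radon--Nikodym derivative integrates to one yields nothing about $\E\bigl[\log\frac{\d\mu_h^S}{\d\mu^S}\bigr]$. Second, even if the entropy $\frac12\int_0^M|h'(t)|^2\,\Pp(S_T\geq t)\,\d t$ were known to be finite, that is exactly membership in $\cmspace^{(1)}$, not in $\cmspace$: when $T<\infty$ and $M=\infty$ the weight $\Pp(S_T\geq t)$ tends to $0$ and $\cmspace\subsetneq\cmspace^{(1)}$ (Example \ref{ggvv}), so the unweighted bound $\int_0^M|h'(t)|^2\,\d t<\infty$ does not follow from it. So both halves of your plan -- the upgrade to fibre-wise equivalence and the recombination in $s$ -- need a different mechanism; the paper's proof of Theorem \ref{ffss} shows the intended mechanism is simply to transport a single classical singularity witness from $\W_M$ to $\Omega$, rather than to argue fibre by fibre.
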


Since $\cmspace=\cmspace^{(0)}\subset\cmspace^{(1)}$, the following result is a direct consequence of Theorems \ref{main} and \ref{ffss}.

\begin{corollary}
    Assume that $h\in\W_M$. Then $\mu_h^S\sim\mu^S$ if, and only if,  $h\in\cmspace$.
\end{corollary}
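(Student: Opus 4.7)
My plan is to prove the corollary as a direct consequence of the two preceding theorems, treating the two implications separately. The overall strategy is that Theorem \ref{ffss} supplies the necessity and Theorem \ref{main} supplies the sufficiency, so the only content added by the corollary is matching the hypothesis of Theorem \ref{main} with the abstract Cameron--Martin condition $h\in\cmspace$.

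For the implication $\mu_h^S\sim\mu^S\Longrightarrow h\in\cmspace$, I would simply cite Theorem \ref{ffss}, which gives exactly this conclusion with no further work required. For the converse, I would start from $h\in\cmspace=\cmspace^{(0)}$ and use the monotonicity of the family $\cmspace^{(\kappa)}$ in $\kappa$, which was observed right after its definition ($\kappa_1\le\kappa_2$ implies $\cmspace^{(\kappa_1)}\subset\cmspace^{(\kappa_2)}$). Applied with $\kappa_1=0$ and $\kappa_2=1$, this gives $h\in\cmspace^{(1)}$. Theorem \ref{main} then applies: it guarantees $\mu_h^S\sim\mu^S$, and as a by-product yields the explicit Radon--Nikodym density
$$
    \frac{\d\mu_h^S}{\d\mu^S}
    =\exp\left[\int_0^{S_T}h'(t)\,\d W_{t}-\frac12\int_0^{S_T}|h'(t)|^2\,\d t\right].
$$

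There is no genuine obstacle here since the substantive work has already been carried out in Theorems \ref{main} and \ref{ffss}. The only conceptual point worth highlighting in writing is that, although Theorem \ref{main} is phrased under the a.s.\ integrability condition $\int_0^{S_T}|h'(t)|^2\,\d t<\infty$ (with $h\in\cmspace^{(1)}$ listed only as a sufficient condition), it is precisely this sufficient condition that is produced for free from $h\in\cmspace$ via the trivial inclusion $\cmspace\subset\cmspace^{(1)}$. Hence the two theorems interlock perfectly and the equivalence claim follows.
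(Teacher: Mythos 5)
Your proposal is correct and matches the paper's argument exactly: the paper also deduces the corollary directly from Theorems \ref{main} and \ref{ffss} via the inclusion $\cmspace=\cmspace^{(0)}\subset\cmspace^{(1)}$, which supplies the hypothesis $\int_0^{S_T}|h'(t)|^2\,\d t<\infty$ a.s.\ of Theorem \ref{main}. Nothing further is needed.
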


For the proof of Theorem \ref{main} we need a few preparations. Note that $(W_t)_{t\in[0,M]}$ can be regarded as a process on the classical Wiener space $(\W_M,\Bscr(\W_M),\mu)$:
$$
    W_t(w):=w(t),\quad t\in[0,M],\; w\in\W_M.
$$
Let $\lambda$ be the distribution of $(S_t)_{t\in[0,T]}$, which is a probability measure on the path space
$$
    \Ss_T
    :=\left\{\ell:[0,T]\to[0,\infty)\,:\,\ell_0=0,\text{\ increasing and c\`adl\`ag}\right\},
$$
which we equip with the Skorokhod topology. Thus, the subordinator $(S_t)_{t\in[0,T]}$ can be realized as a canonical process on $(\Ss_T,\Bscr(\Ss_T),\lambda)$:
$$
    S_t(\ell):=\ell_t,\quad t\in[0,T],\;\ell\in\Ss_T.
$$
Since $S$ and $W$ are independent, $(W_{S_t})_{t\in[0,T]}$ is the canonical process on the product space $(\W_M\times\Ss_T,\Bscr(\W_M)\otimes\Bscr(\Ss_T),\mu\times\lambda)$:
$$
    W_{S_t}(w,\ell)
    :=W_{S_t(\ell)}(w)
    =w(\ell_t),
    \quad t\in[0,T],\; w\in\W_M,\; \ell\in\Ss_T.
$$
Moreover, $\mu^S$ is is a probability measure on the path space
$$
    \Omega:=\left\{w\circ\ell\,:\,w\in\W_M,\; \ell\in\Ss_T\right\}
$$
equipped with the Skorokhod topology. If $T=\infty$, we set
$$
    \ell_\infty:=\lim_{t\uparrow\infty}\ell_t,\quad \ell\in\Ss_\infty.
$$

\begin{proof}[Proof of Theorem \ref{main}]
a) We are going to show that
\begin{align*}
    &\iint\limits_{\W_M\times\Ss_T}F(w\circ\ell+h\circ\ell)\,\mu(\d w)\lambda(\d\ell)\\
    &\quad=\iint\limits_{\W_M\times\Ss_T} F(w\circ\ell)
            \exp\left[\int_0^{\ell_T}h'(t)\,\d W_t(w) -\frac12\int_0^{\ell_T}|h'(t)|^2\,\d t\right]
            \,\mu(\d w)\lambda(\d\ell)
\end{align*}
holds for every bounded measurable function $F$ on $\Omega$. Using a standard monotone class argument, it is enough to check this equality for cylinder functions of the form
$$
    F(w\circ\ell)
    =f\left(w(\ell_{t_1}),\dots,w(\ell_{t_n})\right),\quad w\circ\ell\in\Omega,
$$
where $n\in\N$, $t_1,\dots,t_n\in[0,T]$, $0<t_1<\dots<t_n$ and $f\in C_b(\R^{d\cdot n})$. Therefore, it remains to show that
\begin{equation}\label{ffgg}
\begin{aligned}
    \iint\limits_{\W_M\times\Ss_T}&f\left(w(\ell_{t_1})
    +h(\ell_{t_1}),\dots,w(\ell_{t_n})+h(\ell_{t_n})\right)\mu(\d w)\lambda(\d\ell)\\
    &=\iint\limits_{\W_M\times\Ss_T}
    f\left(w(\ell_{t_1}),\dots,w(\ell_{t_n})\right)\\
    &\qquad\qquad\quad\times\exp\left[\int_0^{\ell_T}h'(t)\,\d W_t(w)-\frac12\int_0^{\ell_T}|h'(t)|^2\,\d t\right]
    \,\mu(\d w)\lambda(\d\ell).
\end{aligned}
\end{equation}

\medskip\noindent
b)
    Fix $\ell\in\Ss_T$ such that $\ell_T\leq M$ and
    \begin{equation}\label{uuzz}
         \int_0^{\ell_T}|h'(t)|^2\,\d t<\infty.
    \end{equation}
    Then the classical Cameron--Martin theorem, applied to the bounded measurable function on $\W_{\ell_T}$:
    $$
       w\mapsto f\left(w(\ell_{t_1}),\dots, w(\ell_{t_n})\right),
    $$
    yields
    \begin{align*}
       \int_{\W_M} &f\left(w(\ell_{t_1})+h(\ell_{t_1}),\dots, w(\ell_{t_n})+h(\ell_{t_n})\right)\mu(\d w)\\
       &=\int_{\W_{\ell_T}} f\left(w(\ell_{t_1})+h(\ell_{t_1}),\dots, w(\ell_{t_n})+h(\ell_{t_n})\right)\mu(\d w)\\
       &=\int_{\W_{\ell_T}}f\left(w(\ell_{t_1}),\dots, w(\ell_{t_n})\right) \exp\left[\int_{0}^{\ell_T}h'(t)\,\d W_t(w)-\frac12\int_{0}^{\ell_T}|h'(t)|^2\,\d t\right]\mu(\d w)\\
       &=\int_{\W_M}f\left(w(\ell_{t_1}),\dots, w(\ell_{t_n})\right) \exp\left[\int_{0}^{\ell_T}h'(t)\,\d W_t(w)-\frac12\int_{0}^{\ell_T}|h'(t)|^2\,\d t\right]\mu(\d w).
    \end{align*}
    Since our assumption implies that $\ell_T\leq M$ and \eqref{uuzz} hold for $\lambda$-almost all  $\ell\in\Ss_T$, we can integrate both sides of the equality with respect to $\lambda(\d\ell)$ to obtain \eqref{ffgg}. This completes the proof.
\end{proof}

\begin{proof}[Proof of Theorem \ref{ffss}]
    Suppose that $h\notin\cmspace$. Then it is a classical result, see e.g.\ \cite[Proof of Theorem 8.1.5, p. 233--234]{Hsu}, that $\mu_h$ and $\mu$ are mutually singular. Note that the proof in \cite{Hsu} uses the time interval $[0,1]$. It is not hard to see that the method used in  \cite{Hsu} also applies to $[0,T]$ for $0<T\leq\infty$. Thus, there exists a measurable subset $A\subset\W_M$ such that
    $$
        \mu(A)=1,\quad \mu_h(A)=0.
    $$
    Let
    $$
        \tilde{A}:=\left\{w\circ\ell\in\Omega\,:\,
        w\in A,\:\ell\in\Ss_T\right\}.
    $$
    Then we have
    $$
        \mu^S(\tilde{A})=\mu(A)=1,\quad \mu_h^S(\tilde{A})=\mu_h(A)=0.
    $$
    This, however, contradicts our assumption that $\mu_h^S$ and $\mu^S$ are equivalent.
\end{proof}

\section{Integration by parts formula and gradient operator}\label{sec3}

Let $h\in\W_M$. The directional derivative of a function $F$ on $\Omega$ in direction $h$ is defined as
$$
    D_hF(w\circ\ell)
    :=\lim_{\epsilon\to 0} \frac{F(w\circ\ell+\epsilon h\circ\ell) -F(w\circ\ell)}{\epsilon},\quad w\circ\ell\in\Omega,
$$
whenever the limit exists. An important class of functions on $\Omega$ for which the above definition of $D_hF$ makes sense are the smooth cylinder functions, denoted by $\Fscr C_b^\infty$, i.e.\ the set of all functions having the form
\begin{equation}\label{jjqq}
    F(w\circ\ell)=f\left(w(\ell_{t_1}),\dots,w(\ell_{t_n})\right),
    \quad w\circ\ell\in\Omega,
\end{equation}
where $n\in\N$, $f\in C_b^\infty(\R^{d\cdot n})$ and $t_1,\dots,t_n\in[0,T]$ with $t_1<\dots<t_n$. If $F\in\Fscr C_b^\infty$ is given by \eqref{jjqq}, then it is clear that $D_hF$ exists everywhere and
\begin{equation}\label{hhgg}
    D_hF(w\circ\ell)
    =\sum_{i=1}^n \left\langle \nabla_if\left(w(\ell_{t_1}),\dots,w(\ell_{t_n})\right), h(\ell_{t_i})\right\rangle _{\R^d},
    \quad w\circ\ell\in\Omega,
\end{equation}
where $\nabla_if$ is the gradient of $f$ w.r.t.\ the $i$th variable.

First, we consider the integration by parts formula.
\begin{theorem}
    If $h\in\W_M\cap AC([0,M];\R^d)$ and $\int_0^{S_T}|h'(t)|^2\,\d t<\infty$ almost surely \textup{(} e.g.\ if $h\in\cmspace^{(1)}$\textup{)}, then for any $F,G\in\Fscr C_b^\infty$,
    $$
        \E\left[GD_hF\right]=\E\left[FD_h^*G\right],
    $$
    where
    $$
        D_h^*G:=-D_hG+G\int_0^{S_T}h'(t)\,\d W_{t}.
    $$
\end{theorem}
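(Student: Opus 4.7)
The plan is to derive this integration by parts formula by differentiating the quasi-invariance identity of Theorem~\ref{main} in a shift parameter. For each $\epsilon\in\R$, the direction $\epsilon h$ satisfies the same hypotheses as $h$, so Theorem~\ref{main} applied to $\epsilon h$ yields
\begin{equation*}
    \E\bigl[\Phi(W_S+\epsilon h(S))\bigr]=\E\bigl[\Phi(W_S)R_\epsilon\bigr],\quad R_\epsilon:=\exp\left(\epsilon\int_0^{S_T}h'(t)\,\d W_t-\frac{\epsilon^2}{2}\int_0^{S_T}|h'(t)|^2\,\d t\right),
\end{equation*}
for every bounded measurable $\Phi$ on $\Omega$. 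I would specialise to $\Phi:=FG\in\Fscr C_b^\infty$ (after, if necessary, refining the two time grids to a common one), which is again a bounded smooth cylinder function.

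Differentiating both sides at $\epsilon=0$ and using the product rule $D_h(FG)=GD_hF+FD_hG$ together with \eqref{hhgg} on the left, and $R_0=1$, $R_0'=\int_0^{S_T}h'(t)\,\d W_t$ on the right, gives
\begin{equation*}
    \E[GD_hF]+\E[FD_hG]=\E\left[FG\int_0^{S_T}h'(t)\,\d W_t\right].
\end{equation*}
Rearranging this is precisely $\E[GD_hF]=\E\bigl[F\bigl(-D_hG+G\int_0^{S_T}h'(t)\,\d W_t\bigr)\bigr]=\E[FD_h^*G]$, as required.

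The main obstacle is the justification of the interchange of $\d/\d\epsilon$ with the expectation, particularly on the right-hand side where $R_\epsilon$ involves an It\^o integral against $W$. My approach is to condition on the subordinator $S$: once $S=\ell$ is fixed with $\ell_T\leq M$ and $\int_0^{\ell_T}|h'(t)|^2\,\d t<\infty$ (which holds for $\lambda$-a.e.\ $\ell$ by hypothesis), the restriction $h|_{[0,\ell_T]}$ lies in the classical Cameron--Martin space of $\W_{\ell_T}$, and $\int_0^{\ell_T}h'(t)\,\d W_t$ is a centred Gaussian with variance $\int_0^{\ell_T}|h'(t)|^2\,\d t$ under $\mu$. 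Boundedness of $f$, $g$ and of their partial derivatives then supplies the dominating functions needed to differentiate under the conditional Wiener expectation, and a Fubini step followed by integration in $\lambda(\d\ell)$ completes the argument. An essentially equivalent (and perhaps slightly more direct) route is to apply the classical integration by parts formula on $\W_{\ell_T}$ pointwise in $\ell$ and then integrate in $\lambda(\d\ell)$, bypassing the $\epsilon$-differentiation entirely.
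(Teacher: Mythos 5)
Your proposal is correct and follows essentially the same route as the paper: both derive the formula by differentiating the quasi-invariance identity of Theorem~\ref{main} at $\epsilon=0$. The only cosmetic difference is that you shift the product $FG$ and use the product rule $D_h(FG)=GD_hF+FD_hG$, whereas the paper shifts $F$ and compensates by evaluating $G$ at $w\circ\ell-\epsilon h\circ\ell$ inside the density; after rearrangement both give the same identity, and your remarks on justifying the interchange of $\d/\d\epsilon$ with the expectation (conditioning on $S$, or applying the classical integration by parts on $\W_{\ell_T}$ pointwise in $\ell$) are, if anything, more careful than the paper's.
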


\begin{proof}
    Using Theorem \ref{main}, it follows that for all $\epsilon\in\R$
    \begin{align*}
        \iint\limits_{\W_M\times\Ss_T} &F(w\circ\ell+\epsilon h\circ\ell) G(w\circ\ell)\,\mu(\d w)\lambda(\d\ell)\\
        &=\iint\limits_{\W_M\times\Ss_T}F(w\circ\ell) G(w\circ\ell-\epsilon h\circ\ell)\\
        &\qquad\qquad\mbox{}\times\exp\left[\epsilon\int_0^{\ell_T}h(t)\,\d W_t(w)-\frac12\epsilon^2 \int_0^{\ell_T}|h'(t)|^2\,\d t\right] \,\mu(\d w)\lambda(\d\ell).
    \end{align*}
    Differentiating this equality w.r.t.\ $\epsilon$ and setting $\epsilon=0$, we arrive at
    \begin{align*}
        \iint\limits_{\W_M\times\Ss_T}&G(w\circ\ell)D_hF(w\circ\ell) \,\mu(\d w)\lambda(\d\ell)\\
        &=\iint\limits_{\W_M\times\Ss_T}F(w\circ\ell) \left\{-D_hG(w\circ\ell)+G(w\circ\ell) \int_0^{\ell_T}h(t)\,\d W_t(w)\right\}\,\mu(\d w)\lambda(\d\ell),
    \end{align*}
    which gives the desired assertion.
\end{proof}

Now we can investigate the gradient operator on $\Omega$.
\begin{lemma}\label{hhmm}
    Let $F\in\Fscr C_b^\infty$ and $\kappa\in\R$. Then for all $w\in\W_M$ and $\lambda$-almost all  $\ell\in\Ss_T$, the map $h\mapsto D_h F(w\circ\ell)$ is a bounded linear functional on $\cmspace^{(\kappa)}$.
\end{lemma}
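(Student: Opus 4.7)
My plan is to work directly from the explicit formula \eqref{hhgg}. Linearity of $h\mapsto D_hF(w\circ\ell)$ is immediate from that formula, since for fixed $w\circ\ell$ the expression
$$
    D_hF(w\circ\ell)=\sum_{i=1}^n \langle\nabla_if(w(\ell_{t_1}),\dots,w(\ell_{t_n})),\,h(\ell_{t_i})\rangle_{\R^d}
$$
depends on $h$ only through the linear point-evaluations $h\mapsto h(\ell_{t_i})$. The real content is boundedness, i.e.\ producing a constant $C(w,\ell)<\infty$ (finite for $\lambda$-a.e.\ $\ell$) such that $|D_hF(w\circ\ell)|\leq C(w,\ell)\,\|h\|_{\cmspace^{(\kappa)}}$.

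To estimate each term, I would first pull $\|\nabla_i f\|_\infty$ out using $f\in C_b^\infty$, reducing the problem to bounding $|h(\ell_{t_i})|$ by $\|h\|_{\cmspace^{(\kappa)}}$. Since $h$ is absolutely continuous with $h(0)=0$, one has $h(\ell_{t_i})=\int_0^{\ell_{t_i}}h'(s)\,\d s$, and the natural tool is a weighted Cauchy--Schwarz inequality. For $\kappa>0$, inserting $[\Pp(S_T\geq s)]^{\kappa/2}\cdot[\Pp(S_T\geq s)]^{-\kappa/2}=1$ in the integrand yields
$$
    |h(\ell_{t_i})|^2\leq\|h\|_{\cmspace^{(\kappa)}}^2\cdot\int_0^{\ell_{t_i}}[\Pp(S_T\geq s)]^{-\kappa}\,\d s.
$$
For $\kappa\leq 0$ the weight satisfies $[\Pp(S_T\geq s)]^\kappa\geq 1$ on $[0,M)$, so the plain Cauchy--Schwarz bound $|h(\ell_{t_i})|^2\leq\ell_{t_i}\int_0^{\ell_{t_i}}|h'(s)|^2\,\d s\leq\ell_{t_i}\,\|h\|_{\cmspace^{(\kappa)}}^2$ suffices.

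The step I expect to be the main obstacle is ensuring that the compensating integral $\int_0^{\ell_{t_i}}[\Pp(S_T\geq s)]^{-\kappa}\,\d s$ is finite for $\lambda$-almost every $\ell$ when $\kappa>0$. Here I would invoke the trichotomy of Remark \ref{eegg}: in the non-deterministic cases $M=\infty$, so $\ell_{t_i}<M$ automatically for $\lambda$-a.e.\ $\ell$ (since $\ell_{t_i}$ is a.s.\ finite for $t_i<\infty$), hence $\Pp(S_T\geq s)\geq\Pp(S_T\geq\ell_{t_i})>0$ uniformly on $[0,\ell_{t_i}]$ and $[\Pp(S_T\geq s)]^{-\kappa}$ is bounded there, making the integral finite. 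In the remaining deterministic case $S_t=ct$ with $M=cT<\infty$, one has $\Pp(S_T\geq s)=1$ on $[0,M]$, so $\cmspace^{(\kappa)}=\cmspace$ and the simple $\kappa=0$ bound already applies.

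Combining these estimates gives $|D_hF(w\circ\ell)|\leq C(w,\ell)\|h\|_{\cmspace^{(\kappa)}}$ with
$$
    C(w,\ell)=\sum_{i=1}^n\|\nabla_if\|_\infty\cdot c_\kappa(\ell_{t_i}),
$$
where $c_\kappa(\ell_{t_i})$ is the square root of either $\ell_{t_i}$ (for $\kappa\leq 0$) or $\int_0^{\ell_{t_i}}[\Pp(S_T\geq s)]^{-\kappa}\,\d s$ (for $\kappa>0$); in either case $c_\kappa(\ell_{t_i})<\infty$ for $\lambda$-almost every $\ell$ by the argument above. Together with the obvious linearity this yields the desired boundedness on $\cmspace^{(\kappa)}$ for all $w\in\W_M$ and $\lambda$-a.e.\ $\ell\in\Ss_T$.
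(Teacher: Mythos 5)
Your argument is correct and essentially identical to the paper's: linearity from \eqref{hhgg}, then the weighted Cauchy--Schwarz bound $|h(\ell_{t_i})|\leq\bigl(\int_0^{\ell_{t_i}}[\Pp(S_T\geq s)]^{-\kappa}\,\d s\bigr)^{1/2}\|h\|_{\cmspace^{(\kappa)}}$, with the compensating integral finite for $\lambda$-a.e.\ $\ell$ because $\Pp(S_T\geq\cdot)$ stays positive up to $\ell_{t_i}$ (resp.\ $\ell_T$). Your split into $\kappa>0$ versus $\kappa\leq0$ and the appeal to Remark \ref{eegg} is only a cosmetic variation of the paper's single estimate $\int_0^{\ell_{t_i}}[\Pp(S_T\geq t)]^{-\kappa}\,\d t\leq\bigl[1\vee[\Pp(S_T\geq\ell_T)]^{-\kappa}\bigr]\ell_{t_i}$.
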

\begin{proof}
    Let $F\in\Fscr C_b^\infty$ given by \eqref{jjqq}. By \eqref{hhgg}, the linearity is obvious. Since for $\lambda$-almost all  $\ell\in\Ss_T$ we have $\ell_t\leq M$ for all $t\in[0,T]$, we obtain for all $w\in\W_M$ and $\lambda$-almost all  $\ell\in\Ss_T$
    \begin{equation}\label{hh44}
    \begin{aligned}
        &\left|D_hF(w\circ\ell)\right|\leq\sum_{i=1}^n \|\nabla_if\|_\infty|h(\ell_{t_i})|\\
        &\quad\leq\sum_{i=1}^n \|\nabla_if\|_\infty\int_0^{\ell_{t_i}}
        \left[\Pp(S_T\geq t)\right]^{-\kappa/2} |h'(t)|\left[\Pp(S_T\geq t)\right]^{\kappa/2}\,\d t\\
        &\quad\leq\sum_{i=1}^n \|\nabla_if\|_\infty
        \left(
        \int_0^{\ell_{t_i}}\left[\Pp(S_T\geq t)\right]^{-\kappa}\,\d t
        \right)^{1/2}
        \left(
        \int_0^{\ell_{t_i}} |h'(t)|^2\left[\Pp(S_T\geq t)\right]^{\kappa}\,\d t
        \right)^{1/2}\\
        &\quad\leq\sum_{i=1}^n \|\nabla_if\|_\infty
        \left(
        \int_0^{\ell_{t_i}}\left[\Pp(S_T\geq t)\right]^{-\kappa}\,\d t
        \right)^{1/2}
        \|h\|_{\cmspace^{(\kappa)}}.
    \end{aligned}
    \end{equation}
    To complete the proof, it remains to note that
    $$
        \int_0^{\ell_{t_i}}\left[\Pp(S_T\geq t)\right]^{-\kappa}\,\d t
        \leq\left[1\vee\left[\Pp(S_T\geq\ell_{t_i})\right]^{-\kappa}\right]\ell_{t_i}
        \leq\left[1\vee\left[\Pp(S_T\geq\ell_{T})\right]^{-\kappa}\right]\ell_{t_i}
    $$
    and
    $$
        \Pp(S_T\geq\ell_{T})>0
    $$
    for $\lambda$-almost all  $\ell\in\Ss_T$.
\end{proof}

Let $F\in\Fscr C_b^\infty$ and $\kappa\in\R$. Combining Lemma \ref{hhmm} with the Riesz representation theorem we find for all
$w\in\W_M$ and $\lambda$-almost all  $\ell\in\Ss_T$ that there exists a unique $D^{(\kappa)}F(w\circ\ell)\in\cmspace^{(\kappa)}$ such that
$$
    \left\langle D^{(\kappa)}F(w\circ\ell),h\right\rangle _{\cmspace^{(\kappa)}}
    =D_hF(w\circ\ell), \quad F\in\Fscr C_b^\infty,\; h\in\cmspace^{(\kappa)}.
$$
For simplicity, we write $DF$ instead of $D^{(0)}F$. If $F\in\Fscr C_b^\infty$ is given by \eqref{jjqq}, then it is easy to see that for all $w\in\W_M$ and $\lambda$-almost all  $\ell\in\Ss_T$ we have
\begin{gather*}
    D^{(\kappa)}F(w\circ\ell)(t)
    =\sum_{i=1}^n\nabla_i
    f\left(w(\ell_{t_1}),\dots,w(\ell_{t_n})\right)
    \int_0^{t\wedge\ell_{t_i}} \left[\Pp(S_T\geq s)\right]^{-\kappa}\,\d s,\quad t\in[0,M],\\
    \left\|D^{(\kappa)}F(w\circ\ell)\right\|_{\cmspace^{(\kappa)}}^2
    =\sum_{i,j=1}^n\left\langle\nabla_i
    f\left(w(\ell_{t_1}),\dots,w(\ell_{t_n})\right),
    \nabla_j f\left(w(\ell_{t_1}),\dots,w(\ell_{t_n})\right)
    \right\rangle_{\R^d}\\
    \times
     \int_0^{\ell_{t_i}\wedge\ell_{t_j}}
     \left[\Pp(S_T\geq s)\right]^{-\kappa}\,\d s.
\end{gather*}
In particular, for $F\in\Fscr C_b^\infty$ having the form \eqref{jjqq}, it holds that for all $w\in\W_M$ and $\lambda$-almost all  $\ell\in\Ss_T$
\begin{gather*}
    DF(w\circ\ell)(t)
    =\sum_{i=1}^n\left(t\wedge\ell_{t_i}\right)\nabla_i
    f\left(w(\ell_{t_1}),\dots,w(\ell_{t_n})\right)
    ,\quad t\in[0,M],\\
     \left\|DF(w\circ\ell)\right\|_{\cmspace}^2
     =\sum_{i=1}^n\left(\ell_{t_{i}}-\ell_{t_{i-1}}
     \right)\left|\sum_{j=i}^n
     \nabla_j
     f\left(w(\ell_{t_1}),\dots,w(\ell_{t_n})\right)
     \right|^2,
\end{gather*}
where $t_0:=0$.

Recall that $\nu$ is the L\'{e}vy measure of the subordinator $(S_t)_{t\in[0,T]}$. We will use the following integrability condition:
\begin{align}\label{H}
    \int_1^\infty x^{p/2}\,\nu(\d x)<\infty,
\tag{\textup{H}$_p$}
\end{align}
where $p>0$. In fact, since $\nu(1,\infty)<\infty$, \eqref{H} is automatically satisfied for $p\leq0$.

\begin{remark}
    It is well known that \eqref{H} is equivalent to $\E S_t^{p/2}<\infty$ for some (or all) $t\in[0,T]$, cf.\ \cite[Theorem 25.3]{Sato}.
\end{remark}

Let us introduce the following index of $S$ at the origin:
$$
     \sigma_0
     :=\sup\left\{\alpha\geq0\,:\,\lim_{u\downarrow0}\frac{\phi(u)}{u^\alpha}=0\right\}.
$$
Noting that
$$
    \lim_{u\downarrow0}\frac{\phi(u)}{u}
    =\lim_{u\downarrow0}\phi'(u)
    =b+\lim_{u\downarrow0}\int_0^\infty x\e^{-ux}\,\nu(\d x)
    =b+\int_0^\infty x\,\nu(\d x)
    \in(0,\infty],
$$
it follows that
$$
    0\leq\sigma_0\leq1.
$$

The following useful proposition is the subordinator counterpart of a result on general Feller processes from \cite{Sch98}. We defer its proof to the appendix (Section~\ref{sec4}).

\begin{proposition}\label{index}
    Let $\phi$ be given by \eqref{bernstein}. Then
    \begin{equation}\label{n1n}
        \sigma_0
        =\sup\left\{\alpha\geq0\,:\,\limsup_{u\downarrow0}\frac{\phi(u)}{u^\alpha}<\infty\right\}
        =\sup\left\{0\leq\rho\leq1\,:\,\int_1^\infty x^\rho\,\nu(\d x)<\infty\right\}.
    \end{equation}
\end{proposition}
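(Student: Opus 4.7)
\textbf{Proof plan for Proposition \ref{index}.} Denote the three suprema by $\sigma_0$ (as defined), $\sigma_1$ (the middle one), and $\sigma_2$ (the right one). The plan is to show the chain of inequalities $\sigma_0 \leq \sigma_1 \leq \sigma_2 \leq \sigma_0$. The first inequality is immediate since $\lim_{u\downarrow 0}\phi(u)/u^\alpha = 0$ trivially entails $\limsup_{u\downarrow 0}\phi(u)/u^\alpha < \infty$. Before the nontrivial parts I would also note, for bookkeeping, that $\sigma_1 \leq 1$: concavity of $\phi$ gives $\phi(u)/u \geq \phi(1)$ for $u \leq 1$, so $\phi(u)/u^\alpha \geq \phi(1)\,u^{1-\alpha} \to \infty$ whenever $\alpha > 1$.

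The step $\sigma_2 \leq \sigma_0$: fix $\rho \in (0,\sigma_2]\cap(0,1]$ and $\alpha \in (0,\rho)$. Writing
$$
    \frac{\phi(u)}{u^\alpha} = b u^{1-\alpha} + u^{-\alpha}\int_0^1(1-\e^{-ux})\,\nu(\d x) + u^{-\alpha}\int_1^\infty(1-\e^{-ux})\,\nu(\d x),
$$
the first summand vanishes as $u \downarrow 0$ since $\alpha < 1$, and the second is bounded by $u^{1-\alpha}\int_0^1 x\,\nu(\d x)$ which also tends to $0$ because $\int_0^1 x\,\nu(\d x) < \infty$. For the third summand I would invoke the elementary inequality $1-\e^{-y}\leq y^\rho$ valid for all $y\geq 0$ and $\rho\in[0,1]$ (split at $y=1$: for $y\leq 1$, $1-\e^{-y}\leq y\leq y^\rho$; for $y\geq 1$, $1-\e^{-y}\leq 1\leq y^\rho$). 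This yields
$$
    u^{-\alpha}\int_1^\infty(1-\e^{-ux})\,\nu(\d x) \leq u^{\rho-\alpha}\int_1^\infty x^\rho\,\nu(\d x) \longrightarrow 0,
$$
since $\rho > \alpha$. Hence $\lim_{u\downarrow 0}\phi(u)/u^\alpha = 0$ for every $\alpha < \sigma_2$, so $\sigma_0 \geq \sigma_2$.

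The step $\sigma_1 \leq \sigma_2$ is the main obstacle and requires a lower bound on $\phi$ in terms of the tail of $\nu$. Assume $\limsup_{u\downarrow 0}\phi(u)/u^\alpha =: C < \infty$ for some $\alpha\in(0,1]$; I will show $\int_1^\infty x^\rho\,\nu(\d x) < \infty$ for every $\rho\in(0,\alpha)$. Discarding the nonnegative drift term and the integral on $(0,1)$ gives $\phi(u) \geq \int_{1/u}^\infty(1-\e^{-ux})\,\nu(\d x) \geq (1-\e^{-1})\,\bar\nu(1/u)$ for $u\leq 1$, where $\bar\nu(t) := \nu([t,\infty))$. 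Therefore $\bar\nu(t) \leq C'\,t^{-\alpha}$ for all sufficiently large $t$. Using the layer-cake identity
$$
    \int_1^\infty x^\rho\,\nu(\d x) = \bar\nu(1) + \rho\int_1^\infty x^{\rho-1}\bar\nu(x)\,\d x,
$$
and inserting the tail bound, the right-hand integral behaves like $\int^\infty x^{\rho-1-\alpha}\,\d x$, which is finite precisely when $\rho < \alpha$. This yields $\sigma_2\geq\rho$ for every $\rho < \alpha\leq\sigma_1$, and thus $\sigma_2\geq\sigma_1$.

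Chaining the three inequalities gives $\sigma_0 = \sigma_1 = \sigma_2$, which is \eqref{n1n}. The only delicate point is the quantitative lower bound $\phi(u) \gtrsim \bar\nu(1/u)$ used in the final step; everything else reduces to the pointwise inequality $1-\e^{-y}\leq y^\rho$ and an application of Fubini/layer-cake.
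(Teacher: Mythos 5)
Your proposal is correct and reaches \eqref{n1n}, but its central step runs along a genuinely different line than the paper's proof. The easy parts coincide in substance: your bound $\sigma_2\le\sigma_0$ uses exactly the paper's device $1-\e^{-y}\le y\wedge 1\le y^{\rho}$ (applied summand by summand rather than first proving $\phi(u)\le C u^{\rho'}$ and then dividing), and your observation that a finite $\limsup$ at $\alpha''$ forces $\phi(u)/u^\alpha\to0$ for $\alpha<\alpha''$ is what the paper isolates as its step a) to identify $\sigma_0$ with the middle supremum; you instead absorb it into the cyclic chain $\sigma_0\le\sigma_1\le\sigma_2\le\sigma_0$, which is a slightly cleaner logical organization. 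The genuine divergence is in $\sigma_1\le\sigma_2$ (the paper's $\sigma_0\le\sigma_0''$): the paper integrates the representation $x^\alpha=\frac{\alpha}{\Gamma(1-\alpha)}\int_0^\infty(1-\e^{-xu})u^{-\alpha-1}\,\d u$ against $\nu$, applies Tonelli and the bound $\int_1^\infty(1-\e^{-xu})\,\nu(\d x)\le\nu(1,\infty)\wedge\phi(u)$, and reads off the moment from $\phi(u)\le cu^{\alpha'}$; you derive the pointwise tail estimate $\phi(u)\ge(1-\e^{-1})\bar\nu(1/u)$, convert $\limsup_{u\downarrow0}\phi(u)/u^\alpha<\infty$ into $\bar\nu(t)\le C't^{-\alpha}$ for large $t$, and conclude by the layer-cake formula. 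Both routes lose the same harmless epsilon ($\rho<\alpha$ here, $\alpha<\alpha'$ there), so neither is sharper; yours is more elementary (no Gamma-integral, only a Fubini on $[1,\infty)^2$) and makes the mechanism transparent---the Laplace exponent controls the tail of $\nu$ at the same polynomial order---while the paper's gives an explicit moment bound purely in terms of $\phi$ without introducing the tail function, and your preliminary remark $\sigma_1\le1$ (the paper gets the analogue from $\lim_{u\downarrow0}\phi(u)/u=b+\int_0^\infty x\,\nu(\d x)>0$) is indeed needed to keep $\rho\le1$ admissible in the definition of $\sigma_2$. Two small points to tighten in writing it up: in the step $\sigma_2\le\sigma_0$ you should take $\rho$ strictly below $\sigma_2$ (the supremum need not be attained, so $\int_1^\infty x^{\sigma_2}\,\nu(\d x)$ may be infinite); and with closed tails $\bar\nu(x)=\nu([x,\infty))$ your layer-cake ``identity'' is, at atoms of $\nu$, only an upper bound---which is all you need.
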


\begin{remark}\label{control}
    \textbf{\upshape a)}
        Clearly, $p/2<\sigma_0$ implies \eqref{H}; conversely \eqref{H} entails that
        either $p/2\leq\sigma_0$ or $p/2>1$.
        In particular, \textup{(H$_2$)} implies $\sigma_0=1$.

    \medskip\noindent\textbf{\upshape b)}
        Since
        $$
            \phi'(0+):=\lim_{u\downarrow0}\phi'(u)
            =b+\int_{(0,1]}x\,\nu(\d x) +\int_1^\infty x\,\nu(\d x),
        $$
        \textup{(H$_2$)} is equivalent to $\phi'(0+)<\infty$.

    \medskip\noindent\textbf{\upshape c)}
        \textup{(H$_2$)} is strictly stronger than $\sigma_0=1$. An example of a Bernstein function satisfying $\sigma_0=1$ but not \textup{(H$_2$)} is (cf. \cite[p. 316]{SSV})
        $$
            \phi(u)=u\log\left(1+\frac1u\right),\quad u>0.
        $$
        This function is even a complete Bernstein function. To see our claim, note that
        $$
            \sigma_0=1
            \iff
            \lim_{u\downarrow0}\frac{\phi(u)}{u^\alpha}=0 \quad
            \text{for all $\alpha\in(0,1)$}
        $$
        and
        $$
           \text{\textup{(H$_2$)} fails}
           \iff
           \phi'(0+)=\infty
           \iff
           \lim_{u\downarrow0}\frac{\phi(u)}{u}=\infty \quad
           \text{(L'H\^{o}spital's rule)}.
        $$
\end{remark}

\begin{lemma}\label{int23}
    Let $T<\infty$ and $M=\infty$. If $\sigma_0>0$ and
    $\theta>1/\sigma_0$, then
    $$
        \int_1^\infty\left[\Pp(S_T\geq t)\right]^{\theta}
        \,\d t<\infty.
    $$
\end{lemma}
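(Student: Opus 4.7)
The plan is to bound the tail $\Pp(S_T\geq t)$ polynomially via Markov's inequality applied to a moment $\E S_T^{p/2}$, then choose $p$ so that the resulting bound on $[\Pp(S_T\geq t)]^\theta$ is integrable near infinity. The hypothesis $\theta>1/\sigma_0$ gives us just enough slack to pick $p$ with $2/\theta<p<2\sigma_0$, which is the key freedom.

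More concretely, I would first fix $p\in\R$ with $2/\theta<p<2\sigma_0$; such a $p$ exists precisely because $\theta>1/\sigma_0$. By Remark \ref{control}(a), $p/2<\sigma_0$ implies \eqref{H}, and by the remark following \eqref{H}, this in turn yields $\E S_T^{p/2}<\infty$ (here we use $T<\infty$). Then Markov's inequality gives, for $t\geq 1$,
$$
    \Pp(S_T\geq t)\leq \frac{\E S_T^{p/2}}{t^{p/2}}.
$$
Raising to the power $\theta$,
$$
    \bigl[\Pp(S_T\geq t)\bigr]^\theta\leq \frac{(\E S_T^{p/2})^\theta}{t^{p\theta/2}},
$$
and since $p\theta/2>1$ by our choice of $p$, the right-hand side is integrable over $[1,\infty)$, which yields the claim.

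The only non-routine point is verifying that the interval $(2/\theta,2\sigma_0)$ is nonempty; this is immediate from $\theta>1/\sigma_0$, and it is precisely this algebraic gap that makes the proposition hold. The rest of the argument is standard Chebyshev/Markov tail estimation, relying solely on the already-established equivalence between \eqref{H} and finiteness of $\E S_T^{p/2}$, together with the characterization of $\sigma_0$ provided by Proposition \ref{index} (through Remark \ref{control}(a)).
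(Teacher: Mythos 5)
Your proof is correct, but it takes a different route from the paper's. You convert the hypothesis $\theta>1/\sigma_0$ into a moment bound: choosing $p$ with $2/\theta<p<2\sigma_0$, you invoke Remark~\ref{control}\,a) (hence the L\'evy-measure characterization in Proposition~\ref{index}) to get \eqref{H}, then the moment equivalence from \cite[Theorem 25.3]{Sato} to get $\E S_T^{p/2}<\infty$, and finish with Markov's inequality, which gives $\Pp(S_T\geq t)\leq \E S_T^{p/2}\, t^{-p/2}$ with $p\theta/2>1$. The paper instead works directly with the Laplace exponent: it bounds $\I_{[t,\infty)}(S_T)\leq 2S_T/(S_T+t)$, represents $x/(x+t)$ as a Laplace-type integral to obtain $\Pp(S_T\geq t)\leq 2T\int_0^\infty \phi(u/t)\e^{-u}\,\d u$, and then uses $\phi(u)\leq cu^{1/\alpha}$ near zero (via the first equality in \eqref{n1n}, with $\alpha\in(1/\sigma_0,\theta)$) together with linear growth of $\phi$ at infinity to get $\Pp(S_T\geq t)\leq C_2 t^{-1/\alpha}$ and conclude from $\theta/\alpha>1$. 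Both arguments rest on producing a polynomial tail bound of any order strictly below $\sigma_0$, so they are essentially parallel at that level; your version is shorter because it leans on the already-stated Remark~\ref{control}\,a) and the Sato moment criterion, while the paper's version is self-contained at the level of $\phi$ (it only needs the $\limsup$ characterization of $\sigma_0$, not the L\'evy-measure one) and yields an explicit quantitative tail estimate. There is no circularity in your use of Proposition~\ref{index}, since its proof in the appendix does not depend on Lemma~\ref{int23}.
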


\begin{remark}
    Let $T<\infty$ and $S$ be an $\alpha$-stable subordinator ($0<\alpha<1$). Obviously, $\sigma_0=\alpha$. It is well known that
    $$
        \Pp(S_T\geq t)\asymp t^{-\alpha},\quad t\geq1.
    $$
    Therefore,
    $$
        \int_1^\infty\left[\Pp(S_T\geq t)\right]^{\theta}\,\d t<\infty
        \iff
        \int_1^\infty t^{-\alpha\theta}\,\d t<\infty
        \iff
        \theta>\frac1\alpha=\frac{1}{\sigma_0}.
    $$
    This means that Lemma \ref{int23} is sharp for  $\alpha$-stable subordinators.
\end{remark}

\begin{proof}[Proof of Lemma \ref{int23}]
    Let $t\geq 1$; using
    $$
        \I_{[t,\infty)}(S_T)\leq\frac{2S_T}{S_T+t}
    \quad\text{and}\quad
        \frac{x}{x+t}=t\int_0^\infty\left(1-\e^{-ux}\right)\e^{-tu}\,\d u,\quad x\geq0,
    $$
    together with Tonelli's theorem, we find
    \begin{align*}
        \Pp(S_T\geq t)
        &= \E\left[\I_{[t,\infty)}(S_T)\right]\\
        &\leq 2\E\left[\frac{S_T}{S_T+t}\right]\\
        &= 2t\E\left[\int_0^\infty\left(1-\e^{-uS_T}\right)\e^{-tu}\,\d u\right]\\
        &= 2t\int_0^\infty\left(1-\e^{-T\phi(u)}\right)\e^{-tu}\,\d u\\
        &= 2\int_0^\infty\left(1-\e^{-T\phi(u/t)}\right)\e^{-u}\,\d u\\
        &\leq 2T\int_0^\infty\phi\left(\frac ut\right)\e^{-u}\,\d u,
    \end{align*}
    where the last estimate follows from the elementary inequality
    $$
        1-\e^{-x}\leq x,\quad x\in\R.
    $$
    Pick $\alpha\in\left(1/\sigma_0,\theta\right)$. By the first equality in \eqref{n1n}, there exists a constant $c=c(\alpha)>0$ such that
    $$
        \phi(u)\leq cu^{1/\alpha},\quad 0<u\leq1.
    $$
    On the other hand, we have for all $u\geq1$ that
    \begin{align*}
        \phi(u)
        &\leq bu+\left(\int_0^1x\,\nu(\d x)\right)u+\nu(x\geq1)\\
        &\leq \left[b+\int_0^1x\,\nu(\d x)+\nu(x\geq1)\right]u
        \,=:\,C_1u.
    \end{align*}
    Thus, we get for $t\geq1$---note that $1/\alpha\in(0,1)$---:
    \begin{align*}
        \Pp(S_T\geq t)
        &\leq 2T\int_0^tc\left(\frac ut\right)^{1/\alpha}\e^{-u}\,\d u + 2T\int_t^\infty C_1\frac ut\e^{-u}\,\d u\\
        &\leq 2T\left(c\int_0^tu^{1/\alpha}\e^{-u}\,\d u + C_1\int_t^\infty u\e^{-u}\,\d u\right) t^{-1/\alpha}\\
        &\leq 2T\left(c\Gamma\left(\frac1\alpha+1\right) + C_1\right)t^{-1/\alpha}\\
        &=:C_2t^{-1/\alpha}.
    \end{align*}
    This, together with $\theta/\alpha>1$, implies that
    \begin{gather*}
        \int_1^\infty\left[\Pp(S_T\geq t)\right]^{\theta}\,\d t
        \leq C_2^\theta\int_1^\infty t^{-\theta/\alpha}\,\d t
        <\infty. \qedhere
    \end{gather*}
\end{proof}

From the point view of functional analysis, the gradient operator is only useful if it is closable in some Banach space. To show this, the following two conditions will be used:
\begin{align}
\tag{A1}\label{eqA1}
    &\text{\eqref{H} holds for some $p>0$;}\\
\tag{A2}\label{eqA2}
    &T<\infty,\; M=\infty,\; p\in(0,2],\;\sigma_0>0 \text{\ \ and\ \ } \kappa<1-1/\sigma_0.
\end{align}

\begin{remark}
\textup{\bfseries a)}
    Since $\sigma_0\leq1$, we know that $\kappa<0$ is necessary for \eqref{eqA2}.

\medskip\noindent\textup{\bfseries b)}
    Assume that $T<\infty$ and $M=\infty$. Then \eqref{eqA1} with $p\in(0,2]$ is strictly stronger than \eqref{eqA2}. \emph{Indeed:} First note that by Proposition \ref{index} \eqref{eqA1} with $p\in(0,2]$ implies $\sigma_0=p/2>0$, and so \eqref{eqA2} is fulfilled with $\kappa<1-2/p$; moreover, for an $\alpha$-stable subordinator ($\alpha\in(0,1)$) \eqref{eqA2} holds with $\sigma_0=\alpha>0$, $\kappa<1-1/\alpha$ and all $p\in(0,2]$, while \eqref{eqA1} holds if and only if $p<2\alpha<2$.
\end{remark}

\begin{lemma}\label{hh09}
    Let $F\in\Fscr C_b^\infty$.

    \medskip\noindent\textup{\bfseries a)}
        If \eqref{eqA1} holds, then $D_hF\in L^p(\mu^S)$ for any $h\in\cmspace$ and $DF\in L^p(\Omega\to\cmspace;\mu^S)$.

   \medskip\noindent\textup{\bfseries b)}
        If \eqref{eqA2} holds, then $D_hF\in L^p(\mu^S)$ for any $h\in\cmspace^{(\kappa)}$ and $D^{(\kappa)}F\in L^p(\Omega\to\cmspace^{(\kappa)};\mu^S)$.
\end{lemma}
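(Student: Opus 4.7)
The plan in both parts is to reduce the required $L^p$ estimates to a single moment computation involving the subordinator, based on the pointwise bounds already available in Lemma \ref{hhmm} and in the closed-form expression for $\|D^{(\kappa)}F\|^2$ derived just before the statement. Once these pointwise bounds are in place, everything will follow from Fubini's theorem, Jensen's inequality, and Lemma \ref{int23}.

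For part (a), I would take $F\in\Fscr C_b^\infty$ as in \eqref{jjqq} and specialise \eqref{hh44} to $\kappa=0$, which collapses to
$$
   |D_h F(w\circ\ell)|\le \sum_{i=1}^n \|\nabla_if\|_\infty\sqrt{\ell_{t_i}}\,\|h\|_{\cmspace};
$$
similarly, the closed-form expression for $\|DF\|_{\cmspace}^2$ gives $\|DF(w\circ\ell)\|_{\cmspace}^2\le C\,\ell_{t_n}$. Raising to the $p$-th power and integrating against $\mu\times\lambda$, both finiteness claims reduce to $\E S_{t_n}^{p/2}<\infty$; by the remark preceding Lemma \ref{int23} this is equivalent to \eqref{H}, which is exactly \eqref{eqA1}.

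For part (b) the same estimates, now used for general $\kappa$, together with $\ell_{t_i}\wedge\ell_{t_j}\le\ell_{t_n}\le\ell_T=S_T$, yield
$$
   |D_h F(w\circ\ell)|^2
   \le C(F)\,\|h\|_{\cmspacenew^{(\kappa)}}^2\int_0^{S_T}[\Pp(S_T\ge s)]^{-\kappa}\,\d s,
$$
and an analogous bound for $\|D^{(\kappa)}F(w\circ\ell)\|_{\cmspacenew^{(\kappa)}}^2$. (Here I use $\cmspacenew^{(\kappa)}$ as a placeholder for $\cmspace^{(\kappa)}$ in the display.) Since $p\in(0,2]$ gives $p/2\le 1$, the concave form of Jensen's inequality lets me commute the exponent with the expectation and reduces both $L^p$ assertions to the single estimate
$$
   \E\int_0^{S_T}[\Pp(S_T\ge s)]^{-\kappa}\,\d s<\infty.
$$
By Fubini this integral equals $\int_0^\infty [\Pp(S_T\ge s)]^{1-\kappa}\,\d s$. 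The contribution on $[0,1]$ is at most $1$ since the integrand is bounded by $1$ (using $1-\kappa>0$, which is forced by $\kappa<1-1/\sigma_0\le 0$); for the tail on $[1,\infty)$ I would apply Lemma \ref{int23} with $\theta=1-\kappa$, whose hypothesis $\theta>1/\sigma_0$ is exactly the condition $\kappa<1-1/\sigma_0$ in \eqref{eqA2}.

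The only delicate point I foresee is the swap of the $p/2$-power with the expectation: this is legitimate only because $p/2\le 1$, so Jensen is applied in the concave direction. This is precisely what forces the range $p\in(0,2]$ in \eqref{eqA2}; going beyond $p=2$ would require a genuinely pathwise control of $\int_0^{S_T}[\Pp(S_T\ge s)]^{-\kappa}\,\d s$ rather than a bound on its mean, which is why this restriction looks unavoidable within the present scheme.
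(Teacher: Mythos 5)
Your proposal is correct and follows essentially the same route as the paper: the pointwise bound from \eqref{hh44} (and the explicit formula for the gradient norm), with part a) reduced to $\E S_{t}^{p/2}<\infty$ via the equivalence with \eqref{H}, and part b) handled by Jensen's inequality for $p/2\le 1$, Tonelli, and Lemma \ref{int23} with $\theta=1-\kappa>1/\sigma_0$. Your closing remark about why $p\in(0,2]$ is forced by the concave Jensen step is exactly the role this restriction plays in the paper's argument as well.
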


\begin{proof}
    By \eqref{hh44} and the elementary inequality
    $$
        \left(\sum_{i=1}^na_i\right)^p\leq n^{(p-1)\vee 0} \sum_{i=1}^na_i^p,\quad a_i\geq0,
    $$
    we obtain for any $\kappa\in\R$ and $w\in\W_M$ and $\lambda$-almost all  $\ell\in\Ss_T$
    $$
        \left|D_hF(w\circ\ell)\right|^p
        \leq n^{(p-1)^+}\|h\|_{\cmspace^{(\kappa)}}^p\sum_{i=1}^n \|\nabla_if\|_\infty^p
        \left(\int_0^{\ell_{t_i}}\left[\Pp(S_T\geq t)\right]^{-\kappa}\,\d t
        \right)^{p/2},\quad h\in\cmspace^{(\kappa)},
    $$
    \begin{align*}
        \left\|D^{(\kappa)}F(w\circ\ell)\right\|_{\cmspace^{(\kappa)}}^p
        &=\sup_{\|h\|_{\cmspace^{(\kappa)}}\leq1}\left|D_hF(w\circ\ell)\right|^p\\
        &\leq n^{(p-1)\vee 0}\sum_{i=1}^n \|\nabla_if\|_\infty^p
        \left(
        \int_0^{\ell_{t_i}}\left[\Pp(S_T\geq t)\right]^{-\kappa}\,\d t
        \right)^{p/2}.
    \end{align*}

   \medskip\noindent\textbf{a)}
   Assume that \eqref{eqA1} holds. Let $\kappa=0$. Then we have
    $$
        \int_{\Ss_T}\left(
        \int_0^{\ell_{t_i}}\left[\Pp(S_T\geq t)\right]^{-\kappa}\,\d t
        \right)^{p/2} \lambda(\d\ell)
        =\int_{\Ss_T}\ell_{t_i}^{p/2}\,\lambda(\d\ell)
        =\E S_{t_i}^{p/2}<\infty,
    $$
    so that the first assertion follows.

    \medskip\noindent\textbf{b)}
    Assume that \eqref{eqA2} holds. We use the Jensen inequality and Lemma \ref{int23} to get
    \begin{align*}
        \int_{\Ss_T}\left(
            \int_0^{\ell_{t_i}}\left[\Pp(S_T\geq t)\right]^{-\kappa}\,\d t
        \right)^{p/2}\,\lambda(\d\ell)
        &\leq\int_{\Ss_T}\left(
            \int_0^{\ell_{T}}\left[\Pp(S_T\geq t)\right]^{-\kappa}\,\d t
        \right)^{p/2}\,\lambda(\d\ell)\\
        &\leq
        \left(\int_{\Ss_T}\left(
            \int_0^{\ell_{T}}\left[\Pp(S_T\geq t)\right]^{-\kappa}\,\d t
        \right)\lambda(\d\ell)\right)^{p/2}\\
        &=\left(
            \int_0^\infty\left[\Pp(S_T\geq t)\right]^{1-\kappa}\,\d t
        \right)^{p/2}\\
        &\leq\left(
            1+\int_1^\infty\left[\Pp(S_T\geq t)\right]^{1-\kappa}\,\d t
        \right)^{p/2}\\
        &<\infty.
        \qedhere
    \end{align*}
\end{proof}

For $p\geq 1$ we set
$$
    L^{p+}(\mu^S) :=\bigcup_{p<r\leq\infty}L^{r}(\mu^S).
$$
As usual, we make the convention $1/0:=\infty$.

\begin{theorem}\label{00vc}
    Assume that \eqref{eqA1} holds with some $p\geq1$
        \textup{(}resp.\ \eqref{eqA2} holds with some $p\in[1,2]$\textup{)}.
    Let $p/(p-1)\leq q\leq\infty$. For any $h\in\cmspace$
        \textup{(}resp.\ $h\in \cmspace^{(\kappa)}$\textup{)},
    the directional derivative operator $D_h:\Fscr C_b^\infty\to L^p(\mu^S)$ is closable in $L^q(\mu^S)$. Denote by $D_h$ its closure and let $D_h^*$ be its adjoint. Then
    $$
        \Dscr(D_h)\cap L^{p+}(\mu^S) \subset\Dscr(D_h^*)
    $$
    and for $G\in\Dscr(D_h)\cap L^{p+}(\mu^S)$,
    \begin{equation}\label{cc12}
        D^*_hG=-D_hG+G\int_0^{S_T}h'(t)\,\d W_t.
    \end{equation}
\end{theorem}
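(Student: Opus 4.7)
My plan is to derive both claims from the integration by parts formula of the preceding theorem, combined with approximation by cylinder functions and Hölder's inequality. The decisive preliminary observation is that, in both regimes, the It\^o integral $I_h:=\int_0^{S_T}h'(t)\,\d W_t$ has moments of every finite order. In \eqref{eqA1} this is clear because $h\in\cmspace$ and $S_T\leq M$ a.s., so $\int_0^{S_T}|h'|^2\,\d t\leq\|h\|_\cmspace^2$. In \eqref{eqA2}, since $\sigma_0\leq 1$, the hypothesis $\kappa<1-1/\sigma_0$ forces $\kappa<0$; together with the monotonicity $\cmspace^{(\kappa)}\subset\cmspace^{(0)}=\cmspace$, this again gives $\int_0^{S_T}|h'|^2\,\d t\leq\|h\|_{\cmspace}^2<\infty$. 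Conditioning on $S$, $I_h$ is therefore a centered Gaussian with a.s.\ bounded variance, so $I_h\in L^r(\mu^S)$ for every $r<\infty$.

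For closability, suppose $F_n\in\Fscr C_b^\infty$, $F_n\to 0$ in $L^q(\mu^S)$ and $D_hF_n\to\eta$ in $L^p(\mu^S)$. For every $G\in\Fscr C_b^\infty$, the integration by parts identity of the preceding theorem yields
$$
    \E[GD_hF_n]=\E\!\left[F_n(-D_hG+GI_h)\right].
$$
By Lemma~\ref{hh09}, boundedness of $G$, and the preliminary integrability of $I_h$, the second factor lies in $L^p(\mu^S)\subset L^{q/(q-1)}(\mu^S)$, where the inclusion is equivalent to the standing hypothesis $q\geq p/(p-1)$. Hölder's inequality then forces $\E[GD_hF_n]\to\E[G\eta]$ and $\E[F_n(-D_hG+GI_h)]\to 0$, so $\E[G\eta]=0$ for every $G\in\Fscr C_b^\infty$. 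Density of $\Fscr C_b^\infty$ in $L^{p/(p-1)}(\mu^S)$ then gives $\eta=0$.

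For the adjoint formula, take $G\in\Dscr(D_h)\cap L^{p+}(\mu^S)$ with $G\in L^r(\mu^S)$ for some $r>p$, and choose $G_n\in\Fscr C_b^\infty$ with $G_n\to G$ in $L^q$ and $D_hG_n\to D_hG$ in $L^p$. Sending $n\to\infty$ in the IBP identity
$$
    \E[G_nD_hF]=\E\!\left[F(-D_hG_n+G_nI_h)\right],\qquad F\in\Fscr C_b^\infty,
$$
the LHS converges to $\E[GD_hF]$ by Hölder ($D_hF\in L^p\subset L^{q/(q-1)}$); the first RHS summand converges by boundedness of $F$ and the $L^p$-convergence of $D_hG_n$; the second converges because $FI_h\in L^{q/(q-1)}(\mu^S)$ by the preliminary step and $G_n\to G$ in $L^q$. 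Hence
$$
    \E[GD_hF]=\E\!\left[F(-D_hG+GI_h)\right]\quad\text{for every }F\in\Fscr C_b^\infty.
$$
To confirm $G\in\Dscr(D_h^*)$ with the asserted formula, I check that $-D_hG+GI_h\in L^{q/(q-1)}(\mu^S)$: the first summand lies in $L^p\subset L^{q/(q-1)}$; for the second, Hölder with exponents $r/q'$ and $r/(r-q')$ (finite since $r>p\geq q':=q/(q-1)$) together with $I_h\in L^{q'r/(r-q')}(\mu^S)$ from the preliminary step yields $GI_h\in L^{q/(q-1)}(\mu^S)$.

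The main obstacle is hidden in the hypotheses: one might fear that under \eqref{eqA2} only second-order moments of $I_h$ are available. The key realisation is that the constraint $\kappa<1-1/\sigma_0$ combined with $\sigma_0\leq 1$ actually forces $h\in\cmspace$, so the conditional variance of $I_h$ is uniformly bounded in $\omega$ and $I_h$ has moments of every order. Once this is recognised, the rest reduces to routine applications of Hölder's inequality and the integration by parts identity.
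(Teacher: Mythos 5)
Your proposal is correct and follows essentially the same route as the paper: closability via the integration by parts identity tested against cylinder functions, and the adjoint formula by approximating $G\in\Dscr(D_h)\cap L^{p+}(\mu^S)$ with cylinder functions and passing to the limit using H\"older, with the extra integrability $r>p$ absorbing the stochastic integral. The only cosmetic difference is that you bound the moments of $\int_0^{S_T}h'(t)\,\d W_t$ by conditioning on $S$ and using the Gaussian law with variance at most $\|h\|_{\cmspace}^2$ (resp.\ $\|h\|_{\cmspace^{(\kappa)}}^2$, via $\kappa<0$), whereas the paper uses Burkholder's inequality to the same effect.
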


\begin{proof}
a)
    Assume that \eqref{eqA1} holds for some $p\geq1$.

    \medskip\noindent
a1)
    Let $\{F_n\}_{n\in\N}\subset\Fscr C_b^\infty$ be a sequence such that $F_n\to 0$ in $L^q(\mu^S)$ and $D_hF_n\to Z$ in $L^p(\mu^S)$ as $n\to\infty$. In order to prove the closability of $D_h$, we have to show that $Z=0$.

    Fix $G\in\Fscr C_b^\infty$. According to Lemma \ref{hh09}, $D_hG\in L^p(\mu^S)$. On the other hand, by Burkholder's inequality,
    we have  for all $\theta\in(0,\infty)$
    \begin{equation}\label{bur}
    \begin{aligned}
    \E\left[\left|\int_0^{S_T}h'(t)\,\d W_t\right|^\theta\right]
        &= \int_{\Ss_T}\E \left[\left|\int_0^{\ell_T}h'(t)\,\d W_t\right|^\theta\right]\,\lambda(\d\ell)\\
        &\leq\int_{\Ss_T}C_\theta\left(\int_0^{\ell_T}|h'(t)|^2\,\d t \right)^{\theta/2}\,\lambda(\d\ell)\\
        &\leq C_\theta\|h\|_{\cmspace}^\theta,
    \end{aligned}
    \end{equation}
    where $C_\theta$ is a positive constant depending on $\theta$. Thus,
    $$
        D_h^*G=-D_hG+G\int_0^{S_T}h'(t)\,\d W_t\in L^p(\mu^S).
    $$
    The conjugate H\"{o}lder exponent $q'$ of $q$ satisfies $1\leq q'\leq p$, and this implies that $D_h^*G\in L^{q'}(\mu^S)$. Therefore, we obtain
    $$
        \E\left[GZ\right]
        =\lim_{n\to\infty}\E\left[GD_hF_n\right]
        =\lim_{n\to\infty}\E\left[F_nD_h^*G\right]
        =0.
    $$
    Since $G\in\Fscr C_b^\infty$ is arbitrary and $\Fscr C_b^\infty$ is dense in $L^q(\mu^S)$, we get $Z=0$.

    \medskip\noindent
a2)
    We proceed as in \cite[Proof of Theorem 5.2]{Hs95}. Let $G\in\Dscr(D_h)\cap L^{r}(\mu^S)$ for some $r\in(p,\infty]$. Since $D_{h}:\Fscr C_b^\infty\to L^p(\mu^S)$ is closable in $L^q(\mu^S)$, there exists a sequence $\{G_n\}_{n\in\N}\subset\Fscr C_b^\infty$ such that $G_n\to G$ in $L^q(\mu^S)$ and $D_hG_n\to D_hG$ in $L^p(\mu^S)$ as $n\to\infty$. With \eqref{bur} it is easy to see that
    $$
        D_h^*G_n
        =-D_hG_n+G_n\int_0^{S_T}h'(t)\,\d W_t
        \xrightarrow[n\to\infty]{} -D_hG+G\int_0^{S_T}h'(t)\,\d W_t$$
    in $L^1(\mu^S)$. Therefore, for any $F\in\Fscr C_b^\infty$, by $D_hF\in L^p(\mu^S)\subset L^{q'}(\mu^S)$, one has
    \begin{align*}
        \E\left[GD_hF\right]
        &=\lim_{n\to\infty} \E\left[G_nD_hF\right]\\
        &=\lim_{n\to\infty} \E\left[FD_h^*G_n\right]\\
        &=\E\left[F\left\{-D_hG+G\int_0^{S_T}h'(t)\,\d W_t\right\}\right].
    \end{align*}
    Since $G\in L^r(\mu^S)$ and \eqref{bur}, together with H\"{o}lder's inequality, imply that
    $$
         G\int_0^{S_T}h'(t)\,\d W_t\in L^p(\mu^S),
    $$
    we get $G\in\Dscr(D_h^*)$; in particular, \eqref{cc12} follows.

\medskip\noindent
b) If \eqref{eqA2} holds for some $p\in[1,2]$, one can prove the claim as in the first case; we only need to replace \eqref{bur} by
    \begin{align*}
        \E\left[\left|\int_0^{S_T}h'(t)\,\d W_t\right|^\theta\right]
        &\leq\int_{\Ss_T}C_\theta\left(\int_0^{\ell_T}|h'(t)|^2\,\d t \right)^{\theta/2}\,\lambda(\d\ell)\\
        &\leq C_\theta\left(\int_0^\infty|h'(t)|^2\left[\Pp(S_T\geq t)\right]^\kappa\,\d t\right)^{\theta/2}\\
        &=C_\theta\|h\|_{\cmspace^{(\kappa)}}^\theta,
    \end{align*}
    where we used, in the second inequality, $\kappa<0$.
\end{proof}

\begin{theorem}\label{3cc}
    Assume that \eqref{eqA1} holds for some $p\geq1$
        \textup{(}resp.\ \eqref{eqA2} holds for some $p\in[1,2]$\textup{)}.
    Let $p/(p-1)\leq q\leq\infty$. The gradient operator $D:\Fscr C_b^\infty\to L^p(\Omega\to\cmspace;\mu^S)$
        \textup{(}resp.\ $D^{(\kappa)}:\Fscr C_b^\infty\to L^p(\Omega\to\cmspace^{(\kappa)};\mu^S)$\textup{)}
    is closable in $L^q(\mu^S)$. Denote its closure again by $D$
        \textup{(}resp.\ $D^{(\kappa)}$\textup{)}.
    For any $h\in\cmspace$, $\Dscr(D)\subset\Dscr(D_h)$; if $F\in\Dscr(D)$ then $D_hF=\langle DF,h\rangle _{\cmspace}$ in $L^p(\mu^S)$
        \textup{(}resp.\ for any $h\in\cmspace^{(\kappa)}$, $\Dscr(D^{(\kappa)})\subset\Dscr(D_h)$\textup{)};
    if $F\in\Dscr(D^{(\kappa)})$ then $D_hF=\langle D^{(\kappa)}F,h\rangle_{\cmspace^{(\kappa)}}$ in $L^p(\mu^S)$\textup{)}.
\end{theorem}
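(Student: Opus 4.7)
The plan is to reduce the vector-valued statement to the scalar directional one already established in Theorem \ref{00vc}, exploiting the Hilbert structure (and the separability) of $\cmspace$ resp.\ $\cmspace^{(\kappa)}$. I will treat the case of $D$ under \eqref{eqA1} in detail; the argument for $D^{(\kappa)}$ under \eqref{eqA2} is verbatim the same with $\cmspace$ replaced by $\cmspace^{(\kappa)}$.

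For closability, suppose $\{F_n\}\subset\Fscr C_b^\infty$ satisfies $F_n\to 0$ in $L^q(\mu^S)$ and $DF_n\to \Xi$ in $L^p(\Omega\to\cmspace;\mu^S)$. Fix $h\in\cmspace$. The pointwise Cauchy--Schwarz inequality in the fibre $\cmspace$ gives
$$
\left|\langle DF_n,h\rangle_{\cmspace}-\langle \Xi,h\rangle_{\cmspace}\right|\leq \|h\|_{\cmspace}\,\|DF_n-\Xi\|_{\cmspace},
$$
so $\langle DF_n,h\rangle_{\cmspace}\to \langle \Xi,h\rangle_{\cmspace}$ in $L^p(\mu^S)$. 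By the definition of $DF_n$ the left-hand side equals $D_hF_n$, hence Theorem \ref{00vc} (the closability of $D_h:\Fscr C_b^\infty\to L^p(\mu^S)$ in $L^q(\mu^S)$) forces $\langle \Xi,h\rangle_{\cmspace}=0$ $\mu^S$-a.s. Choosing a countable dense set $\{h_k\}\subset\cmspace$---which exists since $h\mapsto h'$ is an isometric isomorphism from $\cmspace$ onto the separable space $L^2([0,M];\R^d)$---yields $\Xi=0$ in $\cmspace$ $\mu^S$-a.s., which is closability.

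For the compatibility formula, let $F\in\Dscr(D)$ and choose $F_n\in\Fscr C_b^\infty$ with $F_n\to F$ in $L^q(\mu^S)$ and $DF_n\to DF$ in $L^p(\Omega\to\cmspace;\mu^S)$. For any $h\in\cmspace$ the same Cauchy--Schwarz estimate gives
$$
D_hF_n = \langle DF_n,h\rangle_{\cmspace}\xrightarrow[n\to\infty]{L^p(\mu^S)}\langle DF,h\rangle_{\cmspace}.
$$
Since $F_n\to F$ in $L^q(\mu^S)$ and $D_h$ is closed on its domain (again by Theorem \ref{00vc}), one concludes $F\in\Dscr(D_h)$ and $D_hF=\langle DF,h\rangle_{\cmspace}$ in $L^p(\mu^S)$.

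The argument for $D^{(\kappa)}$ is identical once we note that $\cmspace^{(\kappa)}$ is separable, being isometric via $h\mapsto h'$ to $L^2\bigl([0,M],[\Pp(S_T\geq t)]^\kappa\,\d t;\R^d\bigr)$. There is no real obstacle beyond this organisational one; all the probabilistic and analytic work---the Burkholder estimate, the integrability under \eqref{eqA1}/\eqref{eqA2}, and the explicit form of the adjoint---has already been carried out in Theorem \ref{00vc}. The only point worth flagging is that at no stage do we need $\Xi$ or $DF$ to be interpreted fibrewise outside a $\mu^S$-null set, because the countable-dense-subset reduction transfers the a.s.\ scalar identities into an a.s.\ vector identity automatically.
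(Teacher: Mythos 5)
Your proof is correct, and both parts rest on the same reduction as the paper's: everything scalar-valued is delegated to Theorem \ref{00vc}. The compatibility step (showing $\Dscr(D)\subset\Dscr(D_h)$ and $D_hF=\langle DF,h\rangle_{\cmspace}$) is essentially identical to the paper's argument, which also uses the fibrewise Cauchy--Schwarz bound $|D_hF_n-D_hF_m|\leq\|D(F_n-F_m)\|_{\cmspace}\|h\|_{\cmspace}$ and then the closedness of $D_h$. Where you genuinely diverge is the closability step: the paper fixes an orthonormal basis $\{h_i\}$ of $\cmspace$, introduces the dense class $\Cscr(\cmspace)=\{\sum_{i=1}^m G_ih_i: G_i\in\Fscr C_b^\infty\}$ of $\cmspace$-valued simple functions, and shows $\E\langle G,Y\rangle_{\cmspace}=0$ for all such $G$ (a weak/duality argument, ultimately again via the scalar closability of each $D_{h_i}$), whereas you project the limit $\Xi$ pointwise onto a countable dense set of directions and use separability of $\cmspace$ (resp.\ $\cmspace^{(\kappa)}$, which you correctly justify via the isometry $h\mapsto h'$ onto a weighted $L^2$ space) to upgrade the countably many a.s.\ scalar identities $\langle\Xi,h_k\rangle_{\cmspace}=0$ to $\Xi=0$ a.s. Your route is slightly more economical: it avoids having to argue that $\Cscr(\cmspace)$ is dense in the vector-valued $L^q$ space and that testing against it separates points of $L^p(\Omega\to\cmspace;\mu^S)$, at the cost of the (easy) explicit separability observation; the paper's duality formulation, on the other hand, is the one that generalises most directly to settings where one prefers to work with weak rather than norm limits. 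No gaps.
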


\begin{proof}
    We only prove the statement for the case that \eqref{eqA1}
    holds for some $p\geq1$, since the proof for the other
    case is essentially similar.

    \medskip\noindent
a)
    Let $\{F_n\}_{n\in\N}\subset\Fscr C_b^\infty$ be a sequence such that $F_n\to0$ in $L^q(\mu^S)$ and $DF_n\to Y$ in $L^p(\Omega\to\cmspace;\mu^S)$ as $n\to\infty$. We will prove that $Y=0$. Fix an orthonormal basis $\{h_i\}_{i\in\N}$ of $\cmspace$ and define
    $$
        \Cscr(\cmspace)
        :=\left\{\sum_{i=1}^mG_ih_i\,:\,m\in\N, \; G_i\in\Fscr C_b^\infty\right\}.
    $$
    It is easy to see that $\Cscr\subset L^q(\Omega\to\cmspace;\mu^S)$ is a dense subset. Pick $G=\sum_{i=1}^mG_ih_i\in\Cscr(\cmspace)$. By the orthogonal expansion of $DF_n$ in the orthonormal basis $\{h_i\}_{i\in\N}$ and the closability of $D_{h_i}:\Fscr C_b^\infty\to L^p(\mu^S)$ in $L^q(\mu^S)$, cf.\ Theorem \ref{00vc}, we get
    \begin{align*}
            \E\left\langle G,Y\right\rangle _{\cmspace}
            &= \lim_{n\to\infty}\E\left\langle G,DF_n\right\rangle _{\cmspace}\\
            &=\lim_{n\to\infty}\E\left[\sum_{i=1}^m\sum_{j=1}^\infty \langle h_i,h_j\rangle _{\cmspace}G_i D_{h_j}F_n\right]\\
            &=\lim_{n\to\infty}\sum_{i=1}^m\E\left[G_iD_{h_i}F_n\right]\\
            &=\sum_{i=1}^m\lim_{n\to\infty}\E\left[G_iD_{h_i}F_n\right] = 0.
    \end{align*}
    Since $G\in\Cscr$ is arbitrary and $\Cscr$ is dense in $L^q(\Omega\to\cmspace;\mu^S)$, this implies that $Y=0$. Thus, $D$ is closable.

    \medskip\noindent
b)
    We will adopt the idea used in \cite[Proof of Proposition 5.3]{Hs95}. Let $h\in\cmspace$ and $F\in\Dscr(D)$. Then there exists a sequence $\{F_n\}_{n\in\N}\subset\Fscr C_b^\infty$ such that $F_n\to F$ in $L^q(\mu^S)$ and $DF_n\to DF$ in $L^p(\Omega\to\cmspace;\mu^S)$ as $n\to\infty$. Since
    $$
        |D_hF_n-D_hF_m|^p=\left|\langle D(F_n-F_m),h\rangle _{\cmspace}\right|^p
        \leq\|D(F_n-F_m)\|_{\cmspace}^p\|h\|_{\cmspace}^p
    $$
    holds for all $m,n\in\N$, we conclude that $\{D_hF_n\}_{n\in\N}$ is a Cauchy sequence in $L^p(\mu^S)$. By the closedness of $D_h$, we have $F\in\Dscr(D_h)$ and $D_hF_n\to D_hF$ in $L^p(\mu^S)$ as $n\to\infty$. To finish the proof, it remains to note that
    \begin{align*}
        \|D_hF-\langle DF,h\rangle _{\cmspace}\|_{L^p(\mu^S)}
        &\leq \|D_hF-D_hF_n\|_{L^p(\mu^S)} + \|\langle DF_n,h\rangle -\langle DF,h\rangle _{\cmspace}\|_{L^p(\mu^S)}\\
        &\leq \|D_hF-D_hF_n\|_{L^p(\mu^S)} + \|DF_n-DF\|_{L^p(\Omega\to\cmspace;\mu^S)}\|h\|_{\cmspace}\\
        &\xrightarrow[n\to\infty]{} 0
        \qedhere.
    \end{align*}
\end{proof}

If \eqref{eqA1} holds for $p=2$, then by Lemma \ref{hh09}\,a), we can define a symmetric quadratic form on $\Fscr C_b^\infty$ in the following way:
$$
    \Escr(F,G)
    :=\int_\Omega\langle DF,DG\rangle _{\cmspace}\,\d\mu^S<\infty,
    \quad F,G\in\Fscr C_b^\infty.
$$
If $F\in\Fscr C_b^\infty$ is of the form \eqref{jjqq}, then
$$
    \Escr(F,F)
    =\E\left[\sum_{i=1}^n\left(S_{t_i}- S_{t_{i-1}}\right)\left|\sum_{j=i}^n \nabla_jf\left(W_{S_{t_1}},\dots, W_{S_{t_n}}\right)\right|^2\right],
$$
where $t_0:=0$.

Similarly, if \eqref{eqA2} holds for $p=2$, then with Lemma \ref{hh09}\,b) we can define a symmetric quadratic form on $\Fscr C_b^\infty$:
$$
    \Escr^{(\kappa)}(F,G)
    :=\int_\Omega\left\langle D^{(\kappa)}F, D^{(\kappa)}G\right\rangle _{\cmspace^{(\kappa)}}\,\d\mu^S<\infty,
    \quad F,G\in\Fscr C_b^\infty.
$$
Moreover, for $F\in\Fscr C_b^\infty$ given by \eqref{jjqq}, one has
\begin{align*}
    \Escr^{(\kappa)}(F,F)
    =\E\Bigg[\sum_{i,j=1}^n
    &\left\langle\nabla_i f\left(W_{S_{t_1}},\dots,W_{S_{t_n}}\right),\nabla_j f\left(W_{S_{t_1}},\dots,W_{S_{t_n}}\right)\right\rangle_{\R^d}\\
    &\quad\times\int_0^{S_{t_i\wedge t_j}}\left[\Pp(S_T\geq s)\right]^{-\kappa}\,\d s\Bigg].
\end{align*}

As an immediate consequence of Theorem \ref{3cc} with $p=q=2$, we obtain the following result concerning the Dirichlet form on $L^2(\mu^S)$ (see \cite{Fuk, RM} for more details on the theory of Dirichlet forms).

\begin{proposition}\label{j85cv}
    If \eqref{eqA1}
        \textup{(}resp.\ \eqref{eqA2}\textup{)}
    holds for $p=2$, then the form $(\Escr,\Fscr C_b^\infty)$
        \textup{(}resp.\ $(\Escr^{(\kappa)},\Fscr C_b^\infty)$\textup{)}
    is closable in $L^2(\mu^S)$, and the closure $(\Escr,\Dscr(\Escr))$
        \textup{(}resp.\ $(\Escr^{(\kappa)},\Dscr(\Escr^{(\kappa)}))$\textup{)}
    is a conservative symmetric Dirichlet form on $L^2(\mu^S)$.
\end{proposition}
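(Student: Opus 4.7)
The plan is to deduce all four ingredients --- closability, symmetry, conservativity, and the Markov contraction property --- from Theorem~\ref{3cc} applied with $p=q=2$ together with the chain rule that is built into the explicit formulas for $DF$ and $D^{(\kappa)}F$ on cylinder functions. We handle only the form $(\Escr,\Fscr C_b^\infty)$; the case of $(\Escr^{(\kappa)},\Fscr C_b^\infty)$ is verbatim the same, with Lemma~\ref{hh09}\,b) in place of Lemma~\ref{hh09}\,a) and $\cmspace^{(\kappa)}$ in place of $\cmspace$.

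\medskip
\emph{Closability and symmetry.} Symmetry is immediate from $\langle\cdot,\cdot\rangle_{\cmspace}$. For closability I would note the identity $\Escr(F,F)=\|DF\|_{L^2(\Omega\to\cmspace;\mu^S)}^2$ for $F\in\Fscr C_b^\infty$. If $\{F_n\}\subset\Fscr C_b^\infty$ satisfies $F_n\to 0$ in $L^2(\mu^S)$ and is $\Escr$-Cauchy, then $\{DF_n\}$ is Cauchy in $L^2(\Omega\to\cmspace;\mu^S)$, so it converges to some $Y$. By the closability of $D\colon\Fscr C_b^\infty\to L^2(\Omega\to\cmspace;\mu^S)$ proved in Theorem~\ref{3cc}, we must have $Y=0$, hence $\Escr(F_n,F_n)\to 0$. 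Therefore $(\Escr,\Fscr C_b^\infty)$ is closable; the closure $(\Escr,\Dscr(\Escr))$ is automatically a symmetric closed quadratic form satisfying $\Escr(F,F)=\|DF\|_{L^2(\Omega\to\cmspace;\mu^S)}^2$ for every $F\in\Dscr(\Escr)$.

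\medskip
\emph{Conservativity.} The constant function $\mathbf 1$ lies in $\Fscr C_b^\infty$, and the explicit formula for $DF$ on cylinder functions yields $D\mathbf 1=0$. Hence $\mathbf 1\in\Dscr(\Escr)$ and $\Escr(\mathbf 1,F)=0$ for every $F\in\Dscr(\Escr)$, so the form is conservative.

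\medskip
\emph{Markov property.} This is the step I expect to require the most care. First, for $F\in\Fscr C_b^\infty$ of the form \eqref{jjqq} and any $\psi\in C_b^\infty(\R)$, the function $\psi\circ F=(\psi\circ f)(w(\ell_{t_1}),\ldots,w(\ell_{t_n}))$ again lies in $\Fscr C_b^\infty$, and the chain rule $\nabla_i(\psi\circ f)=\psi'(f)\nabla_i f$ combined with the explicit representation $DF(w\circ\ell)(t)=\sum_{i=1}^n(t\wedge\ell_{t_i})\nabla_i f(\cdots)$ gives $D(\psi\circ F)=\psi'(F)\,DF$, whence
\[
    \Escr(\psi\circ F,\psi\circ F)
    =\int_\Omega |\psi'(F)|^2\,\|DF\|_{\cmspace}^2\,\d\mu^S
    \leq\|\psi'\|_\infty^2\,\Escr(F,F).
\]
To obtain the Markov property for the normal contraction $\phi(x):=(0\vee x)\wedge 1$ on the closure, I would approximate $\phi$ by $\phi_\varepsilon\in C_b^\infty(\R)$ with $\|\phi_\varepsilon'\|_\infty\leq 1$ and $\phi_\varepsilon\to\phi$ uniformly on compacts. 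For $F\in\Dscr(\Escr)$ pick $\Fscr C_b^\infty\ni F_n\to F$ in the $\Escr_1$-norm, so that $\phi_\varepsilon\circ F_n\to\phi_\varepsilon\circ F$ in $L^2(\mu^S)$ and the sequence $D(\phi_\varepsilon\circ F_n)=\phi_\varepsilon'(F_n)\,DF_n$ is bounded in $L^2(\Omega\to\cmspace;\mu^S)$. Extracting a weakly convergent subsequence and using the closedness of $D$ then yields $\phi_\varepsilon\circ F\in\Dscr(\Escr)$ with $\Escr(\phi_\varepsilon\circ F,\phi_\varepsilon\circ F)\leq\Escr(F,F)$; a final pass $\varepsilon\downarrow 0$ (using bounded convergence in $L^2(\mu^S)$ plus lower semicontinuity of $\Escr$) gives $\phi\circ F\in\Dscr(\Escr)$ and $\Escr(\phi\circ F,\phi\circ F)\leq\Escr(F,F)$.

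\medskip
The chain-rule computation for $\Fscr C_b^\infty$ is essentially algebraic and the closability/conservativity pieces are almost free; the real work is in the approximation argument that establishes the Markov property on all of $\Dscr(\Escr)$, for which the correct tool is the weak compactness of $L^2$-bounded sequences combined with the closedness of the gradient delivered by Theorem~\ref{3cc}.
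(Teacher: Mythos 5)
Your proof is correct and is precisely the argument the paper leaves implicit when it calls Proposition~\ref{j85cv} an immediate consequence of Theorem~\ref{3cc} with $p=q=2$: closability of the gradient $D$ (resp.\ $D^{(\kappa)}$) gives closability of the form, constants in $\Fscr C_b^\infty$ with vanishing gradient give conservativity, and the chain rule on cylinder functions combined with smooth approximations of the unit contraction yields the Markov property of the closure. The only point worth making explicit is that your weak-limit step really lands in $\Dscr(\Escr)$; this is immediate either via Mazur's lemma (convex combinations of cylinder functions are again cylinder functions) or by noting that $\Dscr(\Escr)$ coincides with the domain of the operator closure of $D$, whose graph, being a closed subspace, is weakly closed.
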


We close this section by pointing out that it might be interesting (and also challenging) to consider various functional inequalities (cf. \cite{Wbook}) for the Dirichlet forms derived in Proposition \ref{j85cv}.

\section{Appendix}\label{sec4}

In this section, we establish Proposition \ref{index}.
\begin{proof}[Proof of Proposition \ref{index}]
a)
   Denote by $\sigma'_0$ and $\sigma''_0$ the second and
   the third term in \eqref{n1n}, respectively.
   Clearly, $\sigma'_0\geq\sigma_0$. For any $\alpha<\sigma'_0$, pick $\alpha'\in(\alpha,\sigma'_0)$. According to the definition of $\sigma'_0$, one has
   $$
       \limsup_{u\downarrow 0} \frac{\phi(u)}{u^{\alpha'}}<\infty,
   $$
   and so
   $$
       \frac{\phi(u)}{u^{\alpha}}
       =\frac{\phi(u)}{u^{\alpha'}}u^{\alpha'-\alpha}
       \rightarrow0\quad \text{as $u\downarrow0$}.
   $$
   This means that $\alpha\leq\sigma_0$. Letting $\alpha\uparrow\sigma'_0$, we get $\sigma'_0\leq\sigma_0$. Therefore, $\sigma'_0=\sigma_0$.

\medskip\noindent
b)
    We prove that $\sigma_0\leq\sigma''_0$. Without loss of generality, we may assume that $0<\sigma_0\leq1$. Fix any $\alpha\in(0,\sigma_0)\subset(0,1)$ and pick $\alpha'\in(\alpha,\sigma_0)$. By the definition of $\sigma_0$,
    $$
      \lim_{u\downarrow0}\frac{\phi(u)}{u^{\alpha'}}=0
    $$
    and so there exists a constant $c=c(\alpha')>0$ such that
    \begin{equation}\label{smallt}
        \phi(u)\leq cu^{\alpha'},\quad 0<u<1.
    \end{equation}
    Recall the identity
    $$
        x^\alpha
        =\frac{\alpha}{\Gamma(1-\alpha)}\int_0^\infty\left(1-\e^{-xu}\right)u^{-\alpha-1}\,\d u,
        \quad x>0.
    $$
    Combining this with Tonelli's theorem, the following inequality
    $$
        \int_1^\infty\left(1-\e^{-xu}\right)\nu(\d x)
        \leq\nu(1,\infty)\wedge\left(
        \int_0^\infty\left(1-\e^{-xu}\right)\nu(\d x)\right)
        \leq\nu(1,\infty)\wedge
        \phi(u),\quad u>0
    $$
    and \eqref{smallt}, we obtain that
    \begin{align*}
        \int_1^\infty x^\alpha\,\nu(\d x)
        &=\frac{\alpha}{\Gamma(1-\alpha)} \int_0^\infty\left(\int_1^\infty\left(1-\e^{-xu}\right) \nu(\d x)\right)u^{-\alpha-1}\,\d u\\
        &\leq\frac{\alpha}{\Gamma(1-\alpha)} \int_0^\infty\big[\nu(1,\infty)\wedge\phi(u)\big] u^{-\alpha-1}\,\d u\\
        &\leq\frac{\alpha}{\Gamma(1-\alpha)} \int_0^1\phi(u)u^{-\alpha-1}\,\d u
         + \frac{\alpha\nu(1,\infty)}{\Gamma(1-\alpha)} \int_1^\infty u^{-\alpha-1}\,\d u\\
        &\leq\frac{\alpha c}{\Gamma(1-\alpha)} \int_0^1u^{\alpha'-\alpha-1}\,\d u
         +\frac{\alpha\nu(1,\infty)}{\Gamma(1-\alpha)} \int_1^\infty u^{-\alpha-1}\,\d u
        \,<\,\infty.
   \end{align*}
   This implies that $\alpha\leq\sigma''_0$. Since $\alpha<\sigma_0$ is arbitrary, we conclude that $\sigma_0\leq\sigma''_0$.

\medskip\noindent
c)
    It remains to show that $\sigma''_0\leq\sigma_0$. As in part b), we can assume that $\sigma''_0\in(0,1]$. Fix any $\rho\in(0,\sigma''_0)$ and pick $\rho'\in(\rho,\sigma''_0)\subset(0,1)$. Then
    $$
        \int_1^\infty x^{\rho'}\,\nu(\d x)<\infty.
    $$
    Assume that $u\in(0,1)$. Since
    \begin{gather*}
       1-\e^{-xu}
       <
          \begin{cases}
          xu, &\text{if\ \ } 0<x\leq1,\\
          xu\leq(xu)^{\rho'}, &\text{if\ \ } 1<x\leq u^{-1},\\
          1<(xu)^{\rho'}, &\text{if\ \ } x>u^{-1}
          \end{cases}
       \implies
       1-\e^{-xu}
       <
          \begin{cases}
          xu, &\text{if\ \ } 0<x\leq1,\\
          (xu)^{\rho'}, &\text{if\ \ } x>1,
          \end{cases}
    \end{gather*}
    it follows that
    \begin{align*}
        \phi(u)
        &\leq bu+\int_{(0,1]}xu\,\nu(\d x)
           +\int_1^\infty(xu)^{\rho'}\,\nu(\d x)\\
        &\leq\left[b+\int_{(0,1]}x\,\nu(\d x)
           +\int_1^\infty x^{\rho'}\,\nu(\d x)\right]
             u^{\rho'}
        \,=:\,C(\rho')u^{\rho'}.
   \end{align*}
   From this we get
   $$
       \limsup_{u\downarrow0}\frac{\phi(u)}{u^\rho}
       \leq\limsup_{u\downarrow0}C(\rho')u^{\rho'-\rho}=0.
   $$
   Consequently, we obtain $\rho\leq\sigma_0$ and then $\sigma''_0\leq\sigma_0$ by letting $\rho\uparrow\sigma''_0$.
\end{proof}

\end{document}